\theoremstyle{plain}
\newtheorem{Theorem}{Theorem}
\newtheorem{Lemma}{Lemma}
\newtheorem{Corollary}{Corollary}
\newtheorem{Conjecture}{Conjecture}
\begin{document}
	
	\title{Derivations of Lie Algebras of Vector Fields in Infinitely Many Variables}

		\author
	{Oksana Bezushchak}
	\address{Oksana Bezushchak: Faculty of Mechanics and Mathematics, Taras Shevchenko National University of Kyiv, Volodymyrska, 60, Kyiv 01033, Ukraine}
	\email{bezushchak@knu.ua}
	
	\author{Iryna Kashuba}
	\address{Iryna Kashuba: Shenzhen International Center for Mathematics, Southern University of Science and Technology, 1088 Xueyuan Avenue, Nanshan District, Shenzhen, Guangdong, China}
	\email{kashuba@sustech.edu.cn}    
	
	\keywords{Algebra of infinite matrices,  derivation, Lie algebra, locally matrix algebra, polynomial algebra in infinitely many variables}
	\subjclass[2020]{Primary 16W25, 17B66, Secondary 17A36, 17B40,  17B56}
	\maketitle
	
		\begin{abstract}
	Let $W_X(\mathbb{F})$ be the Lie algebra of all derivations of the polynomial algebra $\mathbb{F}[X]$ in infinitely many variables.	We describe  all derivations of $W_X(\mathbb{F})$  over a field of  characteristic zero and  prove that all  such derivations are inner. We also consider   the subalgebras $W_\infty(\mathbb{F})$  and $W_{\text{fin}}(\mathbb{F})$ of the algebra $W_X(\mathbb{F})$ and describe  all of their derivations. 	
	\end{abstract}

	\section{Introduction} 
	
	Let $\mathbb{F}$ be a field of zero characteristic, and let $\mathbb{F}[X_n]$, $n \geq 1$, be the polynomial algebra on the set of variables $X_n = \{x_1, \dots, x_n\}$.
	
	The Lie algebra:
	\[
	W_n(\mathbb{F}) = \text{Der } \mathbb{F}[X_n] = \left\{ \sum_{i=1}^{n} f_i(x_1, \dots, x_n) \frac{\partial}{\partial x_i} \ \Big| \ f_i \in \mathbb{F}[X_n] \right\}
	\]
	of all derivations of the algebra $\mathbb{F}[X_n]$ is an infinite-dimensional simple Lie algebra of Cartan type (see \cite{Rudakov1,Strade}).
	
	Consider the ascending chains: \[
	X_1 \subset X_2 \subset \dots, \quad \text{and} \quad 
	W_1(\mathbb{F}) \subset W_2(\mathbb{F}) \subset \dots .
	\]
	Let $$X = \bigcup_{n \geq 1} X_n ,\quad \text{and} \quad  W_\infty(\mathbb{F}) = \bigcup_{n \geq 1} W_n(\mathbb{F}).$$ 
	
	The Lie algebra $W_\infty(\mathbb{F})$ is a subalgebra of the Lie algebra $W_X(\mathbb{F})$ of all derivations of the polynomial algebra $\mathbb{F}[X]$ in infinitely many variables:
	\[
	W_X(\mathbb{F}) = \text{Der } \mathbb{F}[X] = \left\{ \sum_{i=1}^{\infty} f_i(x_1, \ldots ) \frac{\partial}{\partial x_i} \ \Big| \ f_i \in \mathbb{F}[X] \right\}.
	\] We will consider also the Lie algebra $W_{\text{fin}}(\mathbb{F})$ lying strictly between $W_\infty(\mathbb{F})$ and $W_X(\mathbb{F})$. The algebra $W_{\text{fin}}(\mathbb{F})$ consists of derivations
	\[
	\sum_{i=1}^{\infty} f_i(x_1, \ldots ) \frac{\partial}{\partial x_i}
	\]
	such that each variable $x_n \in X$ occurs in only finitely many polynomials $f_i$, $i \geq 1$. It is easy to see that $W_\infty(\mathbb{F})$ is an ideal of the algebra $W_{\text{fin}}(\mathbb{F})$.

	Floris Takens \cite{Takens} (se also \cite{Farnsteiner,Morimoto}) proved that every derivation of the Lie algebra  $W_n(\mathbb{F})$ over a field of characteristic $0$ is inner. Richard E. Block, Robert L. Wilson, Helmut Strade, and Alexander Premet~\cite{Strade} studied $W_n(\mathbb{F})$ in positive characteristic. For more recent papers concerning algebraic structure of $W_n(\mathbb{F})$, see 	\cite{Bezushchak_Petravchuk_Zelmanov,Klymenko_Lysenko_Petravchuk,Makedonskyi_Petravchuk}.

	In this paper, we describe derivations of the algebra $W_\infty(\mathbb{F}),$ $W_{\text{fin}}(\mathbb{F}),$ and $ W_X(\mathbb{F}) $ over a field $\mathbb{F}$  of zero characteristic.
	
	\begin{Theorem}\label{Theo1} Every derivation of the algebra $W_\infty(\mathbb{F})$ is a restriction of an adjoint operator $\text{ad}(a):x\to [x,a]$, where $a \in W_{\text{fin}}(\mathbb{F})$. \end{Theorem}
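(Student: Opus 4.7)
My plan is to realise $D$ as $\text{ad}(a)$ for some $a\in W_{\text{fin}}(\mathbb{F})$, by first producing, for each $n\ge 1$, an element $a_n\in W_X(\mathbb{F})$ that implements $D$ on $W_n(\mathbb{F})$, and then patching the $a_n$'s into a single $a$. I would begin by noting that $W_n(\mathbb{F})$ is finitely generated as a Lie algebra (for instance by its graded components of degrees $-1,0,1$ in the standard grading, each finite-dimensional), so $D(W_n)\subset W_{m(n)}$ for some $m(n)\ge n$, and $D|_{W_n}$ becomes a derivation of $W_n$ into the $W_n$-module $W_{m(n)}$.

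The main technical step is to show this derivation is inner: there exists $a_n\in W_X(\mathbb{F})$ with $D(w)=[w,a_n]$ for all $w\in W_n$. For this I would decompose $W_{m(n)}$ as a $W_n$-module by writing $\mathbb{F}[X_{m(n)}]=\mathbb{F}[X_n]\otimes V_n$ with $V_n=\mathbb{F}[x_{n+1},\dots,x_{m(n)}]$ a trivial $W_n$-module, and splitting $W_{m(n)}=A\oplus B$ according to whether the index $i$ of $\partial_i$ satisfies $i\le n$ or $i>n$. A direct computation shows $A\cong W_n\otimes V_n$ (adjoint action of $W_n$ on itself) and $B$ is a direct sum of copies of $\mathbb{F}[X_n]\otimes V_n$ (natural representation on $\mathbb{F}[X_n]$). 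Takens' theorem handles the adjoint part; a parallel vanishing for the natural representation (i.e.\ $H^1(W_n,\mathbb{F}[X_n])=0$) handles the other, yielding the required $a_n$.

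Next, for $n\le n'$ the difference $a_{n'}-a_n$ must centralise $W_n$ in $W_X(\mathbb{F})$. A short calculation bracketing against $\partial_j$ and $x_j\partial_j$ for $j\le n$ shows this centraliser equals
\[
C_n=\Bigl\{\sum_{i>n} h_i\partial_i : h_i\in\mathbb{F}[x_{n+1},x_{n+2},\dots]\Bigr\}.
\]
Consequently, for fixed $i$, the $\partial_i$-coefficient of $a_n$ stabilises once $n\ge i$; I denote this stable value by $f_i$ and set $a:=\sum_{i\ge 1}f_i\partial_i\in W_X(\mathbb{F})$.

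To finish, for $w\in W_n$ the difference $a-a_n$ is a telescoping sum of elements of the $C_k$ with $k\ge n$, so $a-a_n\in C_n$, giving $[w,a]=[w,a_n]=D(w)$ and hence $D=\text{ad}(a)|_{W_\infty}$. Moreover, $D(\partial_n)=[\partial_n,a]=\sum_i(\partial_n f_i)\partial_i$ is a finite sum in $W_\infty$, so $x_n$ appears in only finitely many $f_i$, confirming $a\in W_{\text{fin}}(\mathbb{F})$. The main obstacle here is the inner-ness step: extending Takens' theorem to the $W_n$-module $W_{m(n)}$ requires a cohomological argument for the natural representation, together with a polynomiality argument ensuring the resulting $a_n$ lies in $W_X(\mathbb{F})$ rather than in some formal completion.
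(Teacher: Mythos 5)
Your overall architecture (restrict $D$ to $W_n(\mathbb{F})$, realise each restriction as $\mathrm{ad}(a_n)$, compute the centralizer of $W_n(\mathbb{F})$ in $W_X(\mathbb{F})$, and patch by stabilisation of coefficients) is genuinely different from the paper's proof, which works instead with the much smaller subalgebra $\sum_i\mathbb{F}\frac{\partial}{\partial x_i}+\mathfrak{gl}_\infty(\mathbb{F})$, so that only Whitehead-type vanishing for $\mathfrak{sl}_n(\mathbb{F})$ acting on locally finite-dimensional modules is ever needed, and then propagates to all of $W_\infty(\mathbb{F})$ by induction on the length of monomials (Lemma \ref{Lemma10}). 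Your decomposition of $W_{m(n)}$ into an adjoint part $A\cong W_n\otimes V_n$ and a ``natural'' part $B$, your centralizer computation, the patching, and the final argument that $a\in W_{\mathrm{fin}}(\mathbb{F})$ are all correct.

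However, the key technical step fails as stated: $H^1\big(W_n(\mathbb{F}),\mathbb{F}[X_n]\big)\neq 0$. The divergence map
\[
\mathrm{div}:\ \sum_{i=1}^n f_i\frac{\partial}{\partial x_i}\ \longmapsto\ \sum_{i=1}^n\frac{\partial f_i}{\partial x_i}
\]
is a $1$-cocycle with values in the natural module, since $\mathrm{div}([u,v])=u(\mathrm{div}\,v)-v(\mathrm{div}\,u)$, and it is not a coboundary: a coboundary has the form $u\mapsto u(g)$ for a fixed $g\in\mathbb{F}[X_n]$, which vanishes on all $\frac{\partial}{\partial x_i}$ only if $g$ is constant, whereas $\mathrm{div}\big(x_1\frac{\partial}{\partial x_1}\big)=1$. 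Consequently, on each copy of $\mathbb{F}[X_n]$ inside $B$ the cocycle $D|_{W_n}$ could a priori have a nonzero divergence component, in which case no $a_n$ exists at all --- not even in a formal completion --- because for $w\in W_n(\mathbb{F})$ and $i>n$ the $\frac{\partial}{\partial x_i}$-component of any bracket $[w,c]$ is exactly $w(h_i)\frac{\partial}{\partial x_i}$, and $\mathrm{div}$ is not of that form. To close the gap you must exploit that $D$ is a derivation of all of $W_\infty(\mathbb{F})$, not merely of $W_n(\mathbb{F})$, to prove that these divergence components vanish (for instance, by checking that $w\mapsto\mathrm{div}(w)\frac{\partial}{\partial x_i}$ does not extend to a derivation of $W_\infty(\mathbb{F})$, and organising the argument so this forces the component to be zero). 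That is a genuine missing idea rather than a routine verification, and it is precisely the difficulty that the paper's detour through $\mathfrak{sl}_\infty(\mathbb{F})$, $\mathfrak{gl}_\infty(\mathbb{F})$ and the degree-$(-1)$ elements is designed to avoid.
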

	
	\begin{Theorem}\label{Theo2}  All derivations of the algebra $W_{\text{fin}}(\mathbb{F})$ are inner. \end{Theorem}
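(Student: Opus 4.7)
The strategy is to reduce Theorem~\ref{Theo2} to Theorem~\ref{Theo1} by descending to the ideal $W_\infty(\mathbb{F})\triangleleft W_{\text{fin}}(\mathbb{F})$, and then to eliminate the remaining ambiguity via a centralizer computation inside $W_{\text{fin}}(\mathbb{F})$.

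Let $D\in\mathrm{Der}\,W_{\text{fin}}(\mathbb{F})$. The first task is to show that $D$ stabilizes $W_\infty(\mathbb{F})$, making $D|_{W_\infty(\mathbb{F})}$ amenable to Theorem~\ref{Theo1}. I plan to accomplish this via an internal Lie-theoretic description of $W_\infty(\mathbb{F})$ inside $W_{\text{fin}}(\mathbb{F})$: a direct coefficient calculation shows that $y=\sum_i f_i\partial_i\in W_{\text{fin}}(\mathbb{F})$ lies in $W_\infty(\mathbb{F})$ if and only if there exists $n\ge 1$ such that $[y,\partial_j]=0$ and $[y,x_j\partial_j]=0$ for every $j>n$ (the first family of brackets forces every $f_i$ to be a polynomial in $x_1,\dots,x_n$, and the second then forces $f_j=0$ for $j>n$). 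Showing that $D$ preserves this characterization requires pinning down $D(\partial_j)$ and $D(x_j\partial_j)$ for large $j$ up to a finite correction, which is the heart of the argument. Once $D(W_\infty(\mathbb{F}))\subseteq W_\infty(\mathbb{F})$ has been established, Theorem~\ref{Theo1} supplies an element $a\in W_{\text{fin}}(\mathbb{F})$ with $D|_{W_\infty(\mathbb{F})}=\mathrm{ad}(a)|_{W_\infty(\mathbb{F})}$.

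Setting $D':=D-\mathrm{ad}(a)$ (still a derivation of $W_{\text{fin}}(\mathbb{F})$, since $a\in W_{\text{fin}}(\mathbb{F})$) reduces the problem to the case of a derivation vanishing on $W_\infty(\mathbb{F})$. For any $b\in W_{\text{fin}}(\mathbb{F})$ and $x\in W_\infty(\mathbb{F})$ we have $[b,x]\in W_\infty(\mathbb{F})$, whence $0 = D'([b,x])=[D'(b),x]+[b,D'(x)]=[D'(b),x]$. Thus $D'(b)$ belongs to the centralizer $C_{W_{\text{fin}}(\mathbb{F})}(W_\infty(\mathbb{F}))$, and a short computation shows that this centralizer is trivial: commuting with every $\partial_j\in W_\infty(\mathbb{F})$ forces every coefficient $f_i$ in $y=\sum_i f_i\partial_i$ to be a scalar, and commuting further with every $x_j\partial_j$ then forces each such scalar to vanish. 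Hence $D'=0$, so $D=\mathrm{ad}(a)$, as required.

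The main obstacle I anticipate is the first step --- rigorously verifying that every derivation of $W_{\text{fin}}(\mathbb{F})$ preserves the ideal $W_\infty(\mathbb{F})$. This is precisely what makes Theorem~\ref{Theo1} applicable; after that point the argument is merely a clean two-step reduction together with the short centralizer computation sketched above.
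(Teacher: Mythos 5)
Your second half --- passing to $D':=D-\mathrm{ad}(a)$, using the ideal identity $0=D'([b,x])=[D'(b),x]$ for $x\in W_\infty(\mathbb{F})$, and killing $D'(b)$ by the triviality of the centralizer of $W_\infty(\mathbb{F})$ --- is correct and is exactly the paper's endgame (Lemma~\ref{Lemma11}). The genuine gap is in your first step. To invoke Theorem~\ref{Theo1} you need $D(W_\infty(\mathbb{F}))\subseteq W_\infty(\mathbb{F})$; you correctly identify this as ``the heart of the argument'' and then do not prove it. Your internal characterization of $W_\infty(\mathbb{F})$ (the existence of $n$ with $[y,\partial_j]=[y,x_j\partial_j]=0$ for all $j>n$) is a valid characterization, but transporting it through $D$ requires showing $[y,D(\partial_j)]=0$ and $[y,D(x_j\partial_j)]=0$ for all large $j$, i.e.\ it requires precisely the control of $D$ on the generators $\partial_j$, $x_j\partial_j$ that the whole proof is trying to establish. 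Nothing in the proposal supplies that control, so the argument is circular at its central point. (The invariance is true a posteriori, since $D=\mathrm{ad}(w)$ with $w\in W_{\text{fin}}(\mathbb{F})$ and $W_\infty(\mathbb{F})$ is an ideal, but that cannot be assumed.)

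The paper sidesteps this issue entirely, which is worth noting: Lemmas~\ref{Lemma8}--\ref{Lemma10} are formulated for derivations of $L=\sum_i\mathbb{F}\,\partial_i+\mathfrak{gl}_\infty(\mathbb{F})$ and of $W_\infty(\mathbb{F})$ with values in the modules $\widetilde{W}_X$ and $\widetilde{W}$, which already contain $W_{\text{fin}}(\mathbb{F})$. Hence the restriction $D|_{W_\infty(\mathbb{F})}\colon W_\infty(\mathbb{F})\to W_{\text{fin}}(\mathbb{F})\subseteq\widetilde{W}$ is a module-valued derivation to which that machinery applies verbatim: Lemma~\ref{Lemma8} produces $w\in\widetilde{W}_X$ agreeing with $D$ on $L$, Lemma~\ref{Lemma9} upgrades $w$ to an element of $W_{\text{fin}}(\mathbb{F})$ because $D(\mathfrak{sl}_\infty(\mathbb{F}))\subseteq W_{\text{fin}}(\mathbb{F})$, and Lemma~\ref{Lemma10} yields $(D-\mathrm{ad}(w))(W_\infty(\mathbb{F}))=(0)$ --- all without ever asking whether $D$ preserves $W_\infty(\mathbb{F})$. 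To repair your outline, replace ``apply Theorem~\ref{Theo1}'' by ``apply the proof of Theorem~\ref{Theo1} to the derivation $W_\infty(\mathbb{F})\to\widetilde{W}$''; as written, the proposal is incomplete at its main step.
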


	\begin{Theorem}\label{Theo3} All  derivations of algebra $W_X(\mathbb{F})$ are inner. \end{Theorem}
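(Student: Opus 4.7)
\textbf{Proof plan for Theorem~\ref{Theo3}.}

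Let $D \in \text{Der}(W_X(\mathbb{F}))$. The plan is a two-stage reduction to Theorem~\ref{Theo2}: first show that $D$ preserves $W_{\text{fin}}(\mathbb{F})$; then use Theorem~\ref{Theo2} to subtract an inner derivation $\text{ad}(a)$ with $a \in W_{\text{fin}}(\mathbb{F}) \subset W_X(\mathbb{F})$, reducing to the case $D|_{W_{\text{fin}}} = 0$; and finally propagate the vanishing to all of $W_X(\mathbb{F})$.

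For the preservation step, I would give a purely Lie-theoretic characterization of $W_{\text{fin}}(\mathbb{F})$ inside $W_X(\mathbb{F})$ that is automatically respected by any derivation. An element $a = \sum_i f_i \partial_i$ lies in $W_{\text{fin}}(\mathbb{F})$ exactly when, for each $n$, the set $\{\, i : x_n \text{ appears in } f_i \,\}$ is finite; equivalently, $[\partial_n, a] = \sum_i (\partial_n f_i)\,\partial_i$ has only finitely many nonzero $\partial_i$-components for every $n$. The notion of ``$\partial_i$-support'' is itself bracket-intrinsic via the joint eigenspace decomposition under the commuting semisimple operators $\text{ad}(x_i\partial_i)$, whose simultaneous spectrum recovers the monomial decomposition of the coefficient polynomials. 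With $W_{\text{fin}}(\mathbb{F})$ so characterized, $D|_{W_{\text{fin}}}$ is a genuine derivation of $W_{\text{fin}}(\mathbb{F})$; Theorem~\ref{Theo2} then supplies $a \in W_{\text{fin}}(\mathbb{F})$ with $D|_{W_{\text{fin}}} = \text{ad}(a)|_{W_{\text{fin}}}$, and replacing $D$ by $D - \text{ad}(a)$ we may assume $D|_{W_{\text{fin}}} = 0$.

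The final step---that every derivation of $W_X(\mathbb{F})$ vanishing on $W_{\text{fin}}(\mathbb{F})$ is identically zero---is where I expect the main difficulty. The Leibniz identity collapses to $D[b,c] = [Db,c]$ for all $b \in W_X$ and $c \in W_{\text{fin}}$, so $D$ commutes with every $\text{ad}(c)$, $c \in W_{\text{fin}}$. By iterating with carefully chosen $c_1,\ldots,c_k \in W_{\text{fin}}$ (including not only $\partial_n$ and $x_m\partial_n$ but also higher-degree monomial vector fields $x_m^r\partial_n$) so that the nested bracket $[c_k,\ldots,[c_1,b]\ldots]$ eventually lands in $W_{\text{fin}}(\mathbb{F})$---where $D$ annihilates it---one obtains relations $[c_k,\ldots,[c_1,Db]\ldots] = 0$. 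Collecting enough such relations forces $Db$ to centralize $W_{\text{fin}}(\mathbb{F})$ in $W_X(\mathbb{F})$; since commuting with all $\partial_n$ forces constant coefficients and then commuting with all $x_m\partial_n$ kills those constants, this centralizer is trivial, so $Db = 0$. The principal obstacle is that for an arbitrary $b \in W_X \setminus W_{\text{fin}}$ a single bracket $[b,c]$ typically falls outside $W_{\text{fin}}(\mathbb{F})$, so one must verify that a sufficiently rich family of iterated brackets does land there and that the resulting constraints together pin down every component of $Db$; this will rest on combinatorial bookkeeping of the polynomial degrees and variable supports of the coefficients $f_i$ of $b$.
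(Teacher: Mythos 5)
Your first step is not merely unjustified but false: derivations of $W_X(\mathbb{F})$ need not preserve $W_{\mathrm{fin}}(\mathbb{F})$. Take $a=\sum_{i=1}^{\infty}x_1^{i}\frac{\partial}{\partial x_i}\in W_X(\mathbb{F})$ and $D=\mathrm{ad}(a)$. Then $D\bigl(\frac{\partial}{\partial x_1}\bigr)=\pm\sum_{i\geq 1} i\,x_1^{i-1}\frac{\partial}{\partial x_i}$, in which the variable $x_1$ occurs in infinitely many coefficients, so $D$ maps $\frac{\partial}{\partial x_1}\in W_{\mathrm{fin}}(\mathbb{F})$ outside of $W_{\mathrm{fin}}(\mathbb{F})$. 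The reason your ``bracket-intrinsic'' argument fails is that your characterization of $W_{\mathrm{fin}}(\mathbb{F})$ quantifies over the particular elements $\frac{\partial}{\partial x_n}$ and $x_i\frac{\partial}{\partial x_i}$; a derivation is under no obligation to respect conditions phrased in terms of distinguished elements (it need not commute with $\mathrm{ad}(x_i\frac{\partial}{\partial x_i})$ unless it kills $x_i\frac{\partial}{\partial x_i}$), only genuinely characteristic ones. Consequently the planned reduction through Theorem~\ref{Theo2} collapses. The paper instead reduces only to a derivation vanishing on $W_\infty(\mathbb{F})$, by re-running Lemmas~\ref{Lemma8} and \ref{Lemma10}, which treat $d$ as a derivation of the subalgebra $L\subset W_\infty(\mathbb{F})$ with values in the module $\widetilde{W}_X$ and require no invariance of any subalgebra.

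Your final step also does not go through as described. For $b=\sum_i x_1^{i}\frac{\partial}{\partial x_i}$ the iterated brackets with monomial vector fields $\frac{\partial}{\partial x_n}$, $x_m^{r}\frac{\partial}{\partial x_n}$ keep producing elements whose $x_1$-dependence is spread over all components, so they never land in $W_{\mathrm{fin}}(\mathbb{F})$ and you never get to apply $D|_{W_{\mathrm{fin}}}=0$; the relations you do collect hold only for the special tuples $(c_1,\dots,c_k)$ that happen to work, which is far from forcing $Db$ to centralize $W_{\mathrm{fin}}(\mathbb{F})$. The paper's mechanism is genuinely different: after arranging $d(W_\infty(\mathbb{F}))=(0)$, it expands $w$ along powers of a single variable $x_{i_0}$, writes each graded piece $x_{i_0}^{k}w_k$ of $w$ itself as a bracket $[x_{i_0}^{k}\frac{\partial}{\partial x_{i_1}},\widetilde{w_k}]$ whose first factor lies in $W_\infty(\mathbb{F})$ and is therefore killed by $d$, and then pins down the $\frac{\partial}{\partial x_{i_0}}$-component of $d(w)$ using two structural lemmas: $d$ preserves independence of $x_{i_0}$ (Lemma~\ref{Lemma12}), and $d$ applied to elements with coefficients divisible by $x_{i_0}^{k}$ lands in $x_{i_0}^{k-1}W_X(\mathbb{F})$ (Lemma~\ref{Lemma13}). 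Your proposal would need to be rebuilt around an argument of this kind rather than around bracketing $b$ down into $W_{\mathrm{fin}}(\mathbb{F})$.
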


	From Theorem \ref{Theo1}, it follows that not all derivations of $W_{\infty}(\mathbb{F})$ are inner, but the ideal of inner derivations is dense in the algebra of all derivations of $W_{\infty}(\mathbb{F})$ in the Tychonoff topology. Dragomir Ž. Doković and Kaiming Zhao  \cite{Doković_Zhao} proved that the ideal of inner derivations is dense in $W_X(\mathbb{F})$ in the Tychonoff topology in a more general context.

	There is an analogy between polynomial algebras in infinitely many variables and algebras of infinite matrices. Let $\mathbb{N}=\{1,2,\ldots\}$ be the set of all positive integers. We consider infinite $\mathbb{N} \times \mathbb{N}$ matrices over the field $\mathbb{F}$ of arbitrary characteristic. The Lie algebra $\mathfrak{gl}_\infty(\mathbb{F})$ consists of $\mathbb{N} \times \mathbb{N}$ matrices having finitely many nonzero elements, and let $$\mathfrak{sl}_\infty(\mathbb{F}) = [\mathfrak{gl}_\infty(\mathbb{F}), \mathfrak{gl}_\infty(\mathbb{F})].$$ Clearly, $$\mathfrak{sl}_\infty(\mathbb{F}) = \bigcup_{n=2}^{\infty} \mathfrak{sl}_n(\mathbb{F}).$$ The Lie algebra $\mathfrak{gl}_{\mathbb{N}}(\mathbb{F})$ consists of $\mathbb{N} \times \mathbb{N}$ matrices having finitely many nonzero entries in each column. Finally, the intermediate algebra $$\mathfrak{gl}_{\text{fin}}(\mathbb{F}), \quad  \mathfrak{gl}_{\infty}(\mathbb{F}) \subseteq \mathfrak{gl}_{\text{fin}}(\mathbb{F}) \subset \mathfrak{gl}_{\mathbb{N}}(\mathbb{F}),$$ consists of $\mathbb{N} \times \mathbb{N}$ matrices having finitely many nonzero entries in each row and in each column.

	C.-H. Neeb \cite{Neeb} showed that an arbitrary derivation of the Lie algebra $\mathfrak{sl}_\infty(\mathbb{F})$ is of the type $\text{ad}(a)$, $a \in \mathfrak{gl}_{\text{fin}}(\mathbb{F})$, provided that $\text{char} \, \mathbb{F} = 0$. In~\cite{Bezushchak2,Bezushchak_Kashuba_Zelmanov}, we extended this result to algebras of infinite matrices over fields of positive characteristics. We also proved that all derivations of the algebras $\mathfrak{gl}_{\text{fin}}(\mathbb{F})$ and $\mathfrak{gl}_N(\mathbb{F})$ are inner.
	
	The approach in \cite{Bezushchak2} was as follows:
	(1) to lift derivations of Lie algebras of infinite matrices to derivations of associative algebras of infinite matrices, using the solution to Herstein's problems by K.~Beidar, M.~Bre\v sar, M.~Chebotar, and W.S.~Martindale 3nd  (see, \cite{Bei_Bre_Cheb_Mart_1,Bei_Bre_Cheb_Mart_2,Bei_Bre_Cheb_Mart_3});
	(2)~to use N.~Jacobson's description of derivations of dense algebras of linear transformations \cite{Jacobson}.
	
	This approach does not work for Lie algebras of derivations of polynomials. For this reason, we will follow a different path, one that is closer to the description of derivations of locally matrix algebras; see~\cite{Bezushchak1,BezOl_2}.
	
	\section{On locally finite-dimensional $\mathfrak{sl}_\infty(\mathbb{F})$-modules}  
	
Let $\mathbb{N}$ be the set of positive integers.	Consider the Lie algebra:
	\[
	\text{span} \left\{ x_i \frac{\partial}{\partial x_j}   , \ i\not= j; \quad x_i \frac{\partial}{\partial x_i} - x_j \frac{\partial}{\partial x_j}, \ i, j \in \mathbb{N} \right\}.
	\]
	This algebra is isomorphic to $\mathfrak{sl}_\infty(\mathbb{F})$. Hence, we can view $\mathfrak{sl}_\infty(\mathbb{F})$ as a subalgebra of $W_\infty(\mathbb{F})$.  If we consider the Lie algebra \[
	\text{span} \left\{ x_i \frac{\partial}{\partial x_j}   ,  \ i, j \in \mathbb{N} \right\}
	\] that is isomorphic to $\mathfrak{gl}_\infty(\mathbb{F})$, then  the subalgebra $\mathfrak{sl}_\infty(\mathbb{F})$  is an ideal of the algebra $\mathfrak{gl}_\infty(\mathbb{F})$.

	Let $$X^* = \bigcup_{n= 0}^{\infty} \underbrace{X \cdots X}_n$$ be the set of all monomials in $X$; and let
	\[
	\widetilde{\mathbb{F}[X]} = \left\{ \sum_{m \in X^{*}} \alpha_m m \ \Big| \ \alpha_m \in \mathbb{F} \right\}
	\]
	be the vector space of infinite sums that can be identified with the vector space of all maps $X^* \to \mathbb{F}$. This vector space is in fact an algebra since every monomial can be expressed as a product of monomials in finitely many ways.
	
	Consider the vector space:
	\begin{equation}\label{eq1}
		\widetilde{W} = \sum_{i=1}^{\infty} \widetilde{\mathbb{F}[X]} \frac{\partial}{\partial x_i}.
	\end{equation}
	Infinite sums \eqref{eq1} cannot always be identified with derivations of the algebra $\widetilde{\mathbb{F}[X]}$, and a commutator of sums \eqref{eq1}  may not make sense. However, $\widetilde{W}$ can be viewed as a $W_\infty(\mathbb{F})$-module and, consequently, also as $\mathfrak{sl}_\infty(\mathbb{F})$-module, since for any $h_i\in \mathbb{F}[X]$ and $f_j \in \widetilde{\mathbb{F}[X]}$, we have:
	\[
	\Big[\sum_{i=1}^{n} h_i \frac{\partial}{\partial x_i}, \sum_{j=1}^{\infty} f_j \frac{\partial}{\partial x_j}\Big] = \sum_{i=1}^{n}   \Big[h_i \frac{\partial}{\partial x_i }, \sum_{j=1}^{\infty} f_j \frac{\partial }{\partial x_j}\Big] , \quad \text{where}\] 
	\[   \Big[h_i \frac{\partial}{\partial x_i}, \sum_{j=1}^{\infty} f_j \frac{\partial}{\partial x_j}\Big]= \sum_{j=1}^{\infty} h_i \Big(\frac{\partial f_j}{\partial x_i}\Big) \frac{\partial}{\partial x_j} -  \sum_{j=1}^{\infty} f_j \Big(\frac{\partial h_i}{\partial x_j}\Big) \frac{\partial}{\partial x_i} \in  \widetilde{W} ,\] i.e., $[ W_\infty(\mathbb{F}),\widetilde{W}] \subseteq \widetilde{W}.$ 
	
	Recall that a module $M$ over a Lie algebra $L$ is called \textit{locally finite-dimensional} if given a finite subset $S \subset M$ and a finitely generated subalgebra $L_1 \subset L$, the subset $S$ generates a finite-dimensional module over $L_1$.

	\begin{Lemma}\label{Lemma1} Let $L$ be a finite-dimensional Lie algebra over a field $\mathbb{F}$. Let $M_1, \dots, M_s$ be finite-dimensional $L$-modules, $s \geq 1$. Let $\{V_i: i \in I\}$ be a collection of $L$-modules, for some set of indices $I$, where each $V_i$ is isomorphic to one of the modules $M_1, \dots, M_s$. Then the Cartesian sum
		\[
		V = \bigoplus_{i \in I} V_i
		\]
		is a locally finite-dimensional module over $L$. \end{Lemma}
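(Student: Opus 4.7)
The plan is to reduce the problem to linear algebra inside a finite-dimensional quotient of the universal enveloping algebra $U(L)$. Since $L$ is finite-dimensional, any finitely generated subalgebra $L_1 \subseteq L$ is itself finite-dimensional, and the restrictions $M_k\!\downarrow_{L_1}$ remain finite-dimensional; thus it suffices to prove that every finite subset $S\subset V$ generates a finite-dimensional $L$-submodule (applying the same argument to $L_1$ then settles the local finiteness).

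The key observation is that for each $k=1,\dots,s$ the annihilator
\[
J_k \;=\; \mathrm{ann}_{U(L)}(M_k)
\]
is an ideal of finite codimension in $U(L)$, because the representation $U(L)\to \mathrm{End}(M_k)$ factors through the finite-dimensional quotient $U(L)/J_k$. Setting $J := J_1\cap\cdots\cap J_s$, the quotient $A := U(L)/J$ is still a finite-dimensional associative algebra, and each $M_k$ — hence each $V_i$ — is naturally an $A$-module. Consequently the $U(L)$-action on $V=\bigoplus_{i\in I}V_i$ factors through $A$, and this holds uniformly in $i$, which is the only delicate point: the algebra $A$ does not depend on $i$.

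Now take any finite subset $S=\{v_1,\dots,v_m\}\subset V$. The $L$-submodule it generates is
\[
U(L)\cdot v_1+\cdots+U(L)\cdot v_m \;=\; A\cdot v_1+\cdots+A\cdot v_m,
\]
and this is the image of the linear map $A^{\oplus m}\to V$, $(a_1,\dots,a_m)\mapsto\sum_j a_j v_j$. Its dimension is therefore at most $m\dim_{\mathbb{F}} A<\infty$, proving local finite-dimensionality.

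I do not foresee a genuine obstacle here: the entire argument is just the trivial remark that a family of finite-dimensional $L$-modules, belonging to only finitely many isomorphism classes, is annihilated by a common cofinite ideal of $U(L)$, so that the whole direct sum becomes a module over a single finite-dimensional algebra. The hypothesis that only finitely many isomorphism types $M_1,\dots,M_s$ appear is used precisely to ensure that the intersection $J_1\cap\cdots\cap J_s$ still has finite codimension, which would fail for an arbitrary infinite family of finite-dimensional modules.
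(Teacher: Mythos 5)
Your proof is correct, but it takes a genuinely different route from the paper's. The paper argues elementwise: for each $a\in L$ it multiplies the characteristic polynomials of $a$ on $M_1,\dots,M_s$ to get a single polynomial $f$ with $f(a|_V)=0$, so every element of $L$ acts on $V$ as an algebraic operator, and then invokes the Poincar\'e--Birkhoff--Witt theorem to span the cyclic submodule $U(L_1)v$ by ordered monomials with bounded exponents. You instead work globally in $U(L)$: the annihilators $J_k=\mathrm{ann}_{U(L)}(M_k)$ are cofinite two-sided ideals, their intersection $J$ is still cofinite and kills every $V_i$ (here, exactly as in the paper, the finiteness of the set of isomorphism types is what is used), so the whole action factors through the finite-dimensional algebra $A=U(L)/J$ and $U(L)v_1+\cdots+U(L)v_m=Av_1+\cdots+Av_m$ has dimension at most $m\dim_{\mathbb F}A$. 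Your version buys a cleaner and slightly stronger conclusion --- a uniform bound $m\dim A$ on the dimension of the submodule generated by $m$ elements over all of $L$, not just over a finitely generated subalgebra --- and it dispenses with the PBW reduction entirely (beyond the standard fact that the submodule generated by $v$ is $U(L)v$). The paper's elementwise version has the mild advantage of not mentioning annihilator ideals and of isolating the algebraicity of each operator $a|_V$, which is the form in which the statement is usually quoted from Jacobson. Both arguments apply verbatim to the full Cartesian product (not just the finitely supported sum), which is what is actually needed in the application to $\widetilde{W}_\kappa$.
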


	\begin{proof} Let $a \in L$. Consider the linear transformations
		\[
		a\big|_{M_i} \in \text{End}_{\mathbb{F}}(M_i), \quad a\big|_{M_i}: x \mapsto ax, \quad x \in M_i.
	 \quad \text{for} \quad 1 \leq i \leq s.	\] Let $f_i(t)$ be the characteristic polynomial of the linear transformation $a|_{M_i}$. Then
		\[
		f_i(a|_{M_i}) = 0.
		\]
		
		Now set \( f(t) = f_1(t)\cdots f_s(t) \). Then $f(a\big|_{V_i}) = 0$ for any $i \in I$, and therefore $f(a\big|_V) = 0$. By the Poincaré–Birkhoff–Witt Theorem (see~\cite{Jacobson_PBW}), it implies that the module $V$ is locally finite-dimensional. This completes the proof of the lemma. \end{proof}
	
	For a monomial $m \in X^*$ of length $\kappa$, we define the degree $$\deg\Big(m \frac{\partial}{\partial x_i}\Big) = \kappa - 1.$$ Let $\widetilde{W}_\kappa$ denote the subspace $\widetilde{W}_\kappa \subset \widetilde{W}$ of elements of degree $\kappa$, $-1 \leq \kappa < \infty$. Clearly, $\widetilde{W}_\kappa$ is a $\mathfrak{sl}_{\infty}(\mathbb{F})$-submodule.
	
	\begin{Lemma}\label{Lemma2} For any $n \geq 1$, $-1 \leq \kappa < \infty$, the $\mathfrak{sl}_n(\mathbb{F})$-module $\widetilde{W}_\kappa$ is locally finite-dimensional. \end{Lemma}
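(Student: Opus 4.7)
The plan is to apply Lemma~\ref{Lemma1} by exhibiting $\widetilde{W}_\kappa$ as a Cartesian sum of finite-dimensional $\mathfrak{sl}_n(\mathbb{F})$-submodules drawn from a finite list of isomorphism types. The key point is that the embedded copy of $\mathfrak{sl}_n(\mathbb{F})$ only involves the variables $x_1,\dots,x_n$ and the derivations $\partial/\partial x_1,\dots,\partial/\partial x_n$, so the variables $x_j$ with $j>n$ and the derivations $\partial/\partial x_j$ with $j>n$ are ``passive'' under its action.

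To make this precise, I would factor each monomial $m$ of length $\kappa+1$ uniquely as $m=m'm''$ with $m'\in\mathbb{F}[x_1,\dots,x_n]$ and $m''$ a monomial in $Y:=\{x_{n+1},x_{n+2},\dots\}$, and for each such $m''$ with $d:=\deg m''\le\kappa+1$ set
\[
V^{\mathrm{in}}_{m''} \;=\; \mathrm{span}\Bigl\{\, m'm''\,\tfrac{\partial}{\partial x_i} \,:\, 1\le i\le n,\ \deg m' = \kappa+1-d \,\Bigr\},
\]
and for each $j>n$
\[
V^{\mathrm{ex}}_{m'',j} \;=\; \mathrm{span}\Bigl\{\, m'm''\,\tfrac{\partial}{\partial x_j} \,:\, \deg m' = \kappa+1-d \,\Bigr\}.
\]
Both are finite-dimensional, and every basis monomial $m\,\partial/\partial x_i$ of $\widetilde{W}_\kappa$ lies in exactly one of them, so $\widetilde{W}_\kappa$ is the Cartesian (unrestricted) direct sum of this family.

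Using the commutator formula
$[x_k\,\partial/\partial x_l,\, m'm''\,\partial/\partial x_i] = x_k(\partial m'/\partial x_l)\,m''\,\partial/\partial x_i - \delta_{ki}\,m'm''\,\partial/\partial x_l$
for $k,l\le n$ (valid because $\partial m''/\partial x_l=0$ and $\partial x_k/\partial x_i=0$ unless $i=k\le n$), I would verify that each $V^{\mathrm{in}}_{m''}$ and each $V^{\mathrm{ex}}_{m'',j}$ is $\mathfrak{sl}_n(\mathbb{F})$-stable. The same identity shows that the obvious ``forget $m''$'' and ``forget $m''\partial/\partial x_j$'' maps give $\mathfrak{sl}_n(\mathbb{F})$-module isomorphisms of $V^{\mathrm{in}}_{m''}$ with the degree-$(\kappa-d)$ homogeneous component of $W_n(\mathbb{F})$, and of $V^{\mathrm{ex}}_{m'',j}$ with the degree-$(\kappa+1-d)$ homogeneous component of $\mathbb{F}[x_1,\dots,x_n]$ (with trivial action on the fixed $\partial/\partial x_j$). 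Each of these depends only on $d$, so the whole collection represents at most $2(\kappa+2)$ isomorphism classes of finite-dimensional $\mathfrak{sl}_n(\mathbb{F})$-modules.

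Lemma~\ref{Lemma1} then yields the local finite-dimensionality of $\widetilde{W}_\kappa$ over $\mathfrak{sl}_n(\mathbb{F})$. The only real work is the bookkeeping needed to check invariance of the summands and to match them into finitely many isomorphism types; I expect the main (modest) obstacle to be recognising the right decomposition, namely splitting each monomial into its ``in-range'' factor $m'$ and ``out-of-range'' factor $m''$, which makes the module structure transparent and decouples the $\mathfrak{sl}_n(\mathbb{F})$-action from the infinite combinatorial data.
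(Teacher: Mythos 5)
Your proposal is correct and takes essentially the same route as the paper: both split each monomial into its $X_n$-part and its $(X\setminus X_n)$-part, present $\widetilde{W}_\kappa$ as a Cartesian sum of finite-dimensional $\mathfrak{sl}_n(\mathbb{F})$-submodules whose isomorphism types are $\sum_{j=1}^{n}\mathbb{F}[X_n]_i\,\frac{\partial}{\partial x_j}$ and $\mathbb{F}[X_n]_i$ for $0\le i\le\kappa+1$, and then invoke Lemma~\ref{Lemma1}. Your write-up merely makes explicit the indexing by the external monomial $m''$ (and by $j>n$) that the paper's displayed decomposition leaves implicit.
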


	\begin{proof} Let $\mathbb{F}[X_n]_i$ be the linear span of all monomials in $X_n$ of length $i$; and let $U_i$ be the linear span of all monomials in $X \setminus X_n$ of length $i$ for $i\geq 1$. Then:
		\[
		\widetilde{W}_k = \sum_{1\leq j\leq n, \ 0 \leq i \leq k+1} \mathbb{F}[X_n]_i \ U_{k-i+1} \frac{\partial}{\partial x_j} + \sum_{j>n, \ 0 \leq i \leq k+1} \mathbb{F}[X_n]_i \ U_{k-i+1} \frac{\partial}{\partial x_j}
		\] for $-1 \leq k < \infty .$ This is a Cartesian sum of finite-dimensional $\mathfrak{sl}_n(\mathbb{F})$-modules,  $$\text{each isomorphic to} \quad \sum_{j=1}^{n} \ \mathbb{F}[X_n]_i \ \frac{\partial}{\partial x_j} \quad \text{or to} \quad \mathbb{F}[X_n]_i, \quad 0 \leq i \leq k+1.$$ By Lemma \ref{Lemma1}, the $\mathfrak{sl}_n(\mathbb{F})$-module $\widetilde{W}_k$ is locally finite-dimensional. This completes the proof of the lemma. \end{proof}
	
	The following consequence follows from Lemma \ref{Lemma2}.
	
	\begin{Corollary}\label{cor1} For  $-1  \leq k < \infty$, the $\mathfrak{sl}_\infty(\mathbb{F})$-module $\widetilde{W}_\kappa$ is locally finite-dimensional. \end{Corollary}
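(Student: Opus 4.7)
The plan is a direct reduction to Lemma \ref{Lemma2} via the exhaustion $\mathfrak{sl}_\infty(\mathbb{F}) = \bigcup_{n \geq 2} \mathfrak{sl}_n(\mathbb{F})$. Given a finite subset $S \subset \widetilde{W}_\kappa$ and a finitely generated Lie subalgebra $L_1 \subseteq \mathfrak{sl}_\infty(\mathbb{F})$, I would first choose a finite generating set $y_1, \dots, y_r$ for $L_1$. Each $y_j$ lies in some $\mathfrak{sl}_{n_j}(\mathbb{F})$, and since $\mathfrak{sl}_{n_j}(\mathbb{F}) \subseteq \mathfrak{sl}_n(\mathbb{F})$ whenever $n_j \leq n$, setting $n = \max_j n_j$ ensures that all generators, and hence the whole subalgebra $L_1$, sit inside $\mathfrak{sl}_n(\mathbb{F})$.

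Once $L_1 \subseteq \mathfrak{sl}_n(\mathbb{F})$, it becomes a subalgebra of a finite-dimensional Lie algebra, and the action of $L_1$ on $\widetilde{W}_\kappa$ inherited from $\mathfrak{sl}_\infty(\mathbb{F})$ coincides with the restriction of the $\mathfrak{sl}_n(\mathbb{F})$-action. Lemma \ref{Lemma2} then applies directly: the $\mathfrak{sl}_n(\mathbb{F})$-module $\widetilde{W}_\kappa$ is locally finite-dimensional, so the submodule generated by $S$ under $L_1$ is finite-dimensional, which is exactly what the corollary requires.

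There is no real obstacle in this argument; the only substantive observation is the standard fact that a finitely generated subalgebra of a directed union of Lie algebras is contained in one of the terms of the union, and the entire content of the corollary rests on Lemma \ref{Lemma2}.
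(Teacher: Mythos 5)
Your argument is correct and is precisely the (implicit) reduction the paper intends: the corollary is stated as an immediate consequence of Lemma \ref{Lemma2}, and your observation that any finitely generated subalgebra of $\mathfrak{sl}_\infty(\mathbb{F})$ lies in some $\mathfrak{sl}_n(\mathbb{F})$ is exactly the missing step. Nothing further is needed.
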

	
	Consider an element $w \in \widetilde{W}$,
	\[
	w = \sum_{l=1}^{\infty} f_l \frac{\partial}{\partial x_l}, \quad \text{where} \quad f_l \in \widetilde{\mathbb{F}[X]}.
	\]
	Let $1 \leq i \not= j < \infty$. Then
	\begin{equation}\label{eq2}
		\Big[w, x_j \frac{\partial}{\partial x_i}\Big] = f_j \frac{\partial}{\partial x_i} - \sum_{l=1}^{\infty} x_j \left(\frac{\partial f_l}{\partial x_i} \right) \frac{\partial}{\partial x_l}.
	\end{equation}
	
	The $\frac{\partial}{\partial x_i}$-component of the commutator above is:
	\begin{equation}\label{eq3}
		f_j - x_j \frac{\partial f_i}{\partial x_i}.
	\end{equation}
	
	Furthermore, we have:
	\[
	\Big[w, x_i \frac{\partial}{\partial x_i}\Big] = f_i \frac{\partial}{\partial x_i} - \sum_{l=1}^{\infty} \left(\frac{\partial f_l}{\partial x_i} \right) x_i \frac{\partial}{\partial x_l}, \quad \text{and}
	\]
	\[
	\Big[w, x_j \frac{\partial}{\partial x_j}\Big] = f_j \frac{\partial}{\partial x_j} - \sum_{l=1}^{\infty} \left(\frac{\partial f_l}{\partial x_j} \right) x_j \frac{\partial}{\partial x_l}.
	\] Hence:
	\begin{equation}\label{eq4}
		\Big[w, x_i \frac{\partial}{\partial x_i} - x_j \frac{\partial}{\partial x_j}\Big] = f_i \frac{\partial}{\partial x_i} - f_j \frac{\partial}{\partial x_j} + \sum_{l=1}^{\infty} \left(\frac{\partial f_l}{\partial x_j} x_j - \frac{\partial f_l}{\partial x_i} x_i \right) \frac{\partial}{\partial x_l}.
	\end{equation}

	\begin{Lemma}\label{Lemma3}  Let $n\geq 3.$ Then $$\Big\{w \in \widetilde{W} \ \Big|\  \mathfrak{sl}_n(\mathbb{F}) w = (0) \Big\} = $$ $$\widetilde{\mathbb{F}[X \setminus X_n]} \ \Bigg(\sum_{i=1}^{n} x_i \frac{\partial}{\partial x_i}\Bigg) + \sum_{l=n+1}^{\infty} \widetilde{\mathbb{F}[X \setminus X_n]} \frac{\partial}{\partial x_l}.$$ \end{Lemma}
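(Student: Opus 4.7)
The plan is to prove both inclusions separately. The inclusion $\supseteq$ is a direct computation from formulas \eqref{eq2} and \eqref{eq4}: the operator $\sum_{i=1}^n x_i \partial/\partial x_i$ is the ``trace'' of the embedded copy of $\mathfrak{gl}_n(\mathbb{F})$ and therefore commutes with $\mathfrak{sl}_n(\mathbb{F})$; each $\partial/\partial x_l$ with $l > n$ commutes with every $x_i \partial/\partial x_j$ (where $i,j \leq n$) by disjointness of variable supports; and multiplication by any element of $\widetilde{\mathbb{F}[X \setminus X_n]}$ preserves annihilation by $\mathfrak{sl}_n(\mathbb{F})$, since such scalars are themselves annihilated.

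For the reverse inclusion, write $w = \sum_l f_l \partial/\partial x_l$ with $f_l \in \widetilde{\mathbb{F}[X]}$ and impose $[w, x_j \partial/\partial x_i] = 0$ for every pair $i \neq j$ in $\{1, \ldots, n\}$. Formula \eqref{eq2} splits into two independent conditions. The $\partial/\partial x_l$-coefficient for $l \neq i$ gives $x_j(\partial f_l / \partial x_i) = 0$, hence $\partial f_l / \partial x_i = 0$; as $i$ ranges over $\{1, \ldots, n\}$ (an admissible $j \neq i$ always exists because $n \geq 3$), this forces $f_l \in \widetilde{\mathbb{F}[X \setminus X_n]}$ for every $l > n$ and, for $l \leq n$, forces $f_l$ to depend only on $x_l$ among the variables $x_1, \ldots, x_n$. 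Thus for $l \leq n$ one may expand $f_l = \sum_{k \geq 0} x_l^k h_{l, k}$ with $h_{l, k} \in \widetilde{\mathbb{F}[X \setminus X_n]}$. The $\partial/\partial x_i$-coefficient of \eqref{eq2} then yields the second key relation $f_j = x_j (\partial f_i / \partial x_i)$ for every such pair.

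Substituting the expansion gives $f_j = \sum_{k \geq 1} k x_j x_i^{k-1} h_{i, k}$; since $f_j$ does not involve $x_i$, the characteristic-zero hypothesis forces $h_{i, k} = 0$ for all $k \geq 2$, leaving $f_i = h_{i, 0} + x_i h_{i, 1}$ and $f_j = x_j h_{i, 1}$. Matching this against $f_j = h_{j, 0} + x_j h_{j, 1}$ yields $h_{j, 0} = 0$ and $h_{j, 1} = h_{i, 1}$. Since this equality is available for every pair $i \neq j$ in $\{1, \ldots, n\}$, the element $g := h_{i, 1} \in \widetilde{\mathbb{F}[X \setminus X_n]}$ is independent of $i$, and $f_i = x_i g$ for $i \leq n$. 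Assembling the pieces produces exactly $w = g \sum_{i=1}^n x_i \partial/\partial x_i + \sum_{l > n} f_l \partial/\partial x_l$, which is in the claimed sum. The only subtle point, and the main thing to keep track of throughout, is that all identities live inside the formal-sum algebra $\widetilde{\mathbb{F}[X]}$; this causes no difficulty because both $\partial/\partial x_i$ and multiplication by $x_j$ act coefficient-wise on infinite sums, so every relation above may be verified monomial by monomial.
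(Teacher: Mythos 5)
Your proof is correct, and its overall strategy coincides with the paper's: both directions are handled by reading off coefficients of the commutators \eqref{eq2}--\eqref{eq4} with the elements $x_j\frac{\partial}{\partial x_i}$, and the endpoints (getting $f_l\in\widetilde{\mathbb{F}[X\setminus X_n]}$ for $l>n$, and $f_i=x_i g$ with a common $g$ for $i\le n$) are the same. The middle is executed differently, though. The paper invokes the diagonal elements via \eqref{eq4} to show that every monomial $m$ of $f_l$, $l\le n$, satisfies $\deg_{x_i}(m)=\deg_{x_j}(m)$ for $i,j$ distinct from $l$, hence $m=x_l^p(x_1\cdots \widehat{x_l}\cdots x_n)^q m'$, and only then kills $q$ using \eqref{eq2}; this is what forces three distinct indices in $\{1,\dots,n\}$ and hence the hypothesis $n\ge 3$. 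You instead observe that \eqref{eq2} alone already gives $\partial f_l/\partial x_i=0$ for every $l\ne i$ with $i\le n$ (not only for $l>n$), which lets you expand $f_l=\sum_k x_l^k h_{l,k}$ with $h_{l,k}\in\widetilde{\mathbb{F}[X\setminus X_n]}$ at once and finish using only the single relation $f_j=x_j\,\partial f_i/\partial x_i$ from \eqref{eq3}. This bypasses \eqref{eq4} and the degree-counting entirely, needs only two indices (so your argument in fact proves the statement for $n\ge 2$), and makes explicit where characteristic zero enters (cancelling $k$ in $k\,h_{i,k}=0$), a point the paper leaves implicit. Your verification of the inclusion $\supseteq$, which the paper omits as routine, is also fine.
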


	\begin{proof} Let 	\[
		w = \sum_{l=1}^{\infty} f_l \frac{\partial}{\partial x_l}, \quad \text{where} \quad f_l \in \widetilde{\mathbb{F}[X]} \quad \text{and} \quad \mathfrak{sl}_n(\mathbb{F}) w = (0) .
		\] From \eqref{eq2}, it follows that for any $l$ and $i,$ $l \geq n+1,$ $1 \leq i \leq n, $ we have
		\[
		\frac{\partial f_l}{\partial x_i} = 0, \quad  \text{ so } \quad f_l \in \mathbb{F}[X \setminus X_n].
		\]
		Thus, it follows that:
		\begin{equation}\label{eq4+}
			\mathfrak{sl}_n(\mathbb{F}) \ \Big(\sum_{i=1}^{n} f_i \frac{\partial}{\partial x_i} \Big) = (0).\end{equation}
		
		Let $1\leq i, j, l \leq n$ be distinct integers. By \eqref{eq4}, we have $$ \frac{\partial f_l}{\partial x_j} x_j =\frac{\partial f_l}{\partial x_i} x_i .$$

		For a monomial $m$, let $\deg_{x_i}(m)$ be the number of occurrences of $x_i$ in $m$. We showed that for every monomial $m$ appearing in $f_l$, and for any $1 \leq i \neq j \leq n$ with  $i, j$ distinct from $l$, the following holds:
		\[
		\deg_{x_i}(m) = \deg_{x_j}(m).
		\]
		It means that $$m = x_l^p (x_i \cdots x_{l-1} x_{l+1}  \cdots x_n)^q m',$$ where $m'$ is a monomial in $X \setminus X_n$. From the equality \eqref{eq2}, it follows that $q = 0$, $m = x_l^p m'$.
		
		The equality \eqref{eq3} implies that $f_i = x_i a_i$, where $a_i \in \widetilde{\mathbb{F}[X \setminus X_n]}$ for $1 \leq i \leq n$, and $a_i = a_j$ for all $1 \leq i, j \leq n$. This completes the proof of the lemma. \end{proof}
	
	The following consequence follows from Lemma \ref{Lemma3}.
	
	\begin{Corollary}\label{cor2} We have: $$ \big\{ w\in \widetilde{W} \ \big| \ \mathfrak{sl}_{\infty}(\mathbb{F}) w =(0) \big\}= \mathbb{F}\Big(\sum_{i=1}^{\infty} x_i \frac{\partial}{\partial x_i}\Big).$$ \end{Corollary}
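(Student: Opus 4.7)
My plan is to deduce the corollary by intersecting the annihilators of the subalgebras $\mathfrak{sl}_n(\mathbb{F})$ as $n$ ranges over $n \geq 3$, using Lemma \ref{Lemma3}. Since $\mathfrak{sl}_\infty(\mathbb{F}) = \bigcup_{n \geq 3} \mathfrak{sl}_n(\mathbb{F})$, an element $w \in \widetilde{W}$ is annihilated by $\mathfrak{sl}_\infty(\mathbb{F})$ if and only if it is annihilated by $\mathfrak{sl}_n(\mathbb{F})$ for every $n \geq 3$; so the desired annihilator is $\bigcap_{n \geq 3} \{w \in \widetilde{W} \mid \mathfrak{sl}_n(\mathbb{F}) w = (0)\}$.

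Fix $w = \sum_{l=1}^{\infty} f_l \frac{\partial}{\partial x_l}$ annihilated by $\mathfrak{sl}_\infty(\mathbb{F})$. By Lemma \ref{Lemma3}, for each $n \geq 3$ there is an element $a_n \in \widetilde{\mathbb{F}[X \setminus X_n]}$ with $f_i = x_i\, a_n$ for all $1 \leq i \leq n$, while $f_l \in \widetilde{\mathbb{F}[X \setminus X_n]}$ for $l > n$. For any fixed index $i$, as soon as $n \geq \max(i,3)$ the equality $x_i\, a_n = f_i$ holds, which forces $a_n$ to be independent of $n$ in that range. Letting $i$ vary shows that all the $a_n$ coincide with a single element $a \in \widetilde{\mathbb{F}[X]}$.

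Now $a = a_n \in \widetilde{\mathbb{F}[X \setminus X_n]}$ for every $n \geq 3$, so $a$ involves no variables at all; thus $a = \alpha \in \mathbb{F}$. Consequently $f_i = \alpha\, x_i$ for every index $i \geq 1$: for $i \leq n$ this is the identity $f_i = x_i a_n = \alpha x_i$, and for any $l$ we can apply this with any $n \geq l$. Therefore $w = \alpha \sum_{i=1}^{\infty} x_i \frac{\partial}{\partial x_i}$, and the reverse inclusion is obvious from Lemma \ref{Lemma3}, completing the proof.

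The only substantive step is the observation that $\bigcap_{n \geq 3}\widetilde{\mathbb{F}[X \setminus X_n]} = \mathbb{F}$ together with the compatibility of the scalars $a_n$ across different $n$; once these are in hand, Lemma \ref{Lemma3} does all the remaining work and nothing further is required.
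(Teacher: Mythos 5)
Your argument is correct and is essentially the paper's intended derivation: the paper states only that the corollary ``follows from Lemma \ref{Lemma3}'', and intersecting the $\mathfrak{sl}_n(\mathbb{F})$-annihilators over $n \geq 3$, using injectivity of multiplication by $x_i$ on $\widetilde{\mathbb{F}[X]}$ to identify the coefficients $a_n$ and the fact that $\bigcap_{n}\widetilde{\mathbb{F}[X\setminus X_n]}=\mathbb{F}$, is exactly how that deduction goes.
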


	\begin{Lemma}\label{Lemma4} For any $n \geq 2$ and $-1 \leq k < \infty$, the first cohomology group vanishes:
		\[
		H^1\big(\mathfrak{sl}_n(\mathbb{F}), \widetilde{W}_k\big) = 0.
		\] \end{Lemma}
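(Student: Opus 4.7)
The plan is to exploit Lemma~\ref{Lemma2}'s Cartesian decomposition of $\widetilde{W}_k$ into finite-dimensional $\mathfrak{sl}_n(\mathbb{F})$-submodules and reduce the vanishing of $H^1$ to Whitehead's first lemma applied to each factor.

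First I would refine the decomposition in the proof of Lemma~\ref{Lemma2}. For each $0 \leq i \leq k+1$ and each monomial $u$ in $X \setminus X_n$ of length $k-i+1$, set
\[
V^{(1)}_{i,u} = \sum_{j=1}^n \mathbb{F}[X_n]_i \cdot u \cdot \frac{\partial}{\partial x_j}, \qquad V^{(2)}_{i,u,j} = \mathbb{F}[X_n]_i \cdot u \cdot \frac{\partial}{\partial x_j} \text{ for } j > n.
\]
Since $u$ does not involve any of $x_1,\dots,x_n$, and for $j>n$ the derivation $\frac{\partial}{\partial x_j}$ is likewise inert under $\mathfrak{sl}_n(\mathbb{F})$, a one-line bracket computation of the form $[x_p \frac{\partial}{\partial x_q}, m\, u\, \frac{\partial}{\partial x_j}]$ confirms that each of these subspaces is a finite-dimensional $\mathfrak{sl}_n(\mathbb{F})$-submodule. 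Moreover $\widetilde{W}_k$ is the \emph{Cartesian sum} of all these $V_\alpha$, so each factor carries a canonical $\mathfrak{sl}_n(\mathbb{F})$-equivariant projection $\pi_\alpha : \widetilde{W}_k \to V_\alpha$, and the family $\{\pi_\alpha\}$ separates points.

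Now I would take a $1$-cocycle $\varphi : \mathfrak{sl}_n(\mathbb{F}) \to \widetilde{W}_k$ and compose it with each projection to obtain $1$-cocycles $\varphi_\alpha = \pi_\alpha \circ \varphi : \mathfrak{sl}_n(\mathbb{F}) \to V_\alpha$. Since $\mathfrak{sl}_n(\mathbb{F})$ is a finite-dimensional semisimple Lie algebra over a field of characteristic zero and each $V_\alpha$ is finite-dimensional, Whitehead's first lemma gives $H^1(\mathfrak{sl}_n(\mathbb{F}), V_\alpha) = 0$. Hence for each $\alpha$ there exists $v_\alpha \in V_\alpha$ such that $\varphi_\alpha(a) = a \cdot v_\alpha$ for every $a \in \mathfrak{sl}_n(\mathbb{F})$. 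Assembling the $v_\alpha$ into an element $v \in \widetilde{W}_k$ of the Cartesian sum, the identities $\pi_\alpha(a\, v) = a\, v_\alpha = \pi_\alpha(\varphi(a))$ hold for every $\alpha$; since the projections separate points this forces $\varphi(a) = a\, v$ for all $a$, so $\varphi$ is a coboundary.

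The main (and essentially only) obstacle is the infinite-sum bookkeeping: one must verify that the tuple $(v_\alpha)$ genuinely assembles to an element of $\widetilde{W}_k$ and that the $\mathfrak{sl}_n(\mathbb{F})$-action commutes with each $\pi_\alpha$. Both are automatic once the Cartesian-sum structure is pinned down, because $\widetilde{\mathbb{F}[X]}$ is by definition the space of arbitrary formal sums over monomials, and the $\mathfrak{sl}_n(\mathbb{F})$-action preserves each individual $V_\alpha$. After those set-up checks, Whitehead's lemma supplies all the substantive content.
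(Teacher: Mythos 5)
Your argument is correct, but it handles the infinite-dimensionality of $\widetilde{W}_k$ differently from the paper. Both proofs ultimately rest on Whitehead's first lemma for finite-dimensional modules over the semisimple algebra $\mathfrak{sl}_n(\mathbb{F})$ in characteristic zero, and both rely on the Cartesian decomposition of $\widetilde{W}_k$ exhibited in the proof of Lemma~\ref{Lemma2}. The difference is in how the cocycle is brought into the finite-dimensional setting. The paper argues ``from the source'': the image $d(\mathfrak{sl}_n(\mathbb{F}))$ is spanned by finitely many elements, so by the local finite-dimensionality asserted in Lemma~\ref{Lemma2} the submodule $V$ it generates is finite-dimensional, and Whitehead is applied once to $d:\mathfrak{sl}_n(\mathbb{F})\to V$. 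You argue ``from the target'': you project the cocycle onto each finite-dimensional factor $V_\alpha$ of the product, trivialize each projection by Whitehead, and reassemble the (possibly infinitely many, mostly nonzero) potentials $v_\alpha$ into a single element $v$, which lies in $\widetilde{W}_k$ precisely because $\widetilde{\mathbb{F}[X]}$ admits arbitrary formal sums. Your route needs the two set-up checks you flag --- equivariance of the projections $\pi_\alpha$ (equivalently, that each $V_\alpha$ is genuinely a submodule, which your bracket computation confirms) and membership of the assembled tuple in the Cartesian sum --- but in exchange it does not invoke the statement of Lemma~\ref{Lemma2} at all, only the decomposition underlying its proof; in effect you reprove the relevant consequence of local finite-dimensionality in situ. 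The paper's version is shorter given Lemma~\ref{Lemma2}; yours is self-contained and makes explicit where the ``arbitrary formal sums'' structure of $\widetilde{W}$ is used, which is the same mechanism the paper later exploits in the limit argument of Lemma~\ref{Lemma5}.
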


	\begin{proof}  Let $d: \mathfrak{sl}_n(\mathbb{F}) \to \widetilde{W}_k$ be a derivation; and  let $V$ be the $\mathfrak{sl}_n(\mathbb{F})$-submodule generated by $d(\mathfrak{sl}_n(\mathbb{F}))$. Since $\mathfrak{sl}_n(\mathbb{F})$ is perfect, i.e.,  $\mathfrak{sl}_n(\mathbb{F}) = [\mathfrak{sl}_n(\mathbb{F}) , \mathfrak{sl}_n(\mathbb{F})]$, it follows that $d(\mathfrak{sl}_n(\mathbb{F})) \subseteq V$. Lemma \ref{Lemma2} implies that $\dim_{\mathbb{F}} V < \infty$. Hence, the derivation $d: \mathfrak{sl}_n(\mathbb{F}) \to V$ is inner; see~\cite{Jacobson_PBW}. This completes the proof of the lemma. \end{proof}
	
	\begin{Lemma}\label{Lemma5} For any $-1 \leq k < \infty$, the first cohomology group vanishes: $$H^1\big(\mathfrak{sl}_{\infty}(\mathbb{F}), \widetilde{W}_k\big) = (0).$$\end{Lemma}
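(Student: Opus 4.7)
The plan is to bootstrap the finite-rank vanishing of Lemma~\ref{Lemma4} to the infinite-rank case via a coefficient-wise limit in the Tychonoff topology on $\widetilde{W}_k$. Let $d\colon\mathfrak{sl}_\infty(\mathbb{F})\to\widetilde{W}_k$ be a derivation. By Lemma~\ref{Lemma4}, for each $n\geq 3$ there exists $w_n\in\widetilde{W}_k$ with $d|_{\mathfrak{sl}_n(\mathbb{F})}=\text{ad}(w_n)$, and any two such choices differ by an element of the $\mathfrak{sl}_n(\mathbb{F})$-annihilator explicitly described in Lemma~\ref{Lemma3}. The goal is to modify and pass to a limit of the $w_n$ so as to produce a single $w\in\widetilde{W}_k$ realizing $d$ on all of $\mathfrak{sl}_\infty(\mathbb{F})$.

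Write each element of $\widetilde{W}_k$ as a formal sum $\sum_{l,m}\alpha_{l,m}\,m\,\frac{\partial}{\partial x_l}$, indexed by pairs $(l,m)$ with $l\in\mathbb{N}$ and $m$ a monomial of length $k+1$. The key observation extracted from Lemma~\ref{Lemma3} is the following: if $n$ is so large that $n\geq l$ and $X_n$ contains every variable occurring in $m$, then the $(l,m)$-coefficient of every element of the $\mathfrak{sl}_n(\mathbb{F})$-annihilator in $\widetilde{W}_k$ vanishes unless $m=x_l$ (which can only occur when $k=0$). Indeed, such an annihilator element has $\frac{\partial}{\partial x_l}$-component $a\,x_l$ (for $l\leq n$) or $g_l$ (for $l>n$), with $a,g_l\in\widetilde{\mathbb{F}[X\setminus X_n]}$; under our size assumption on $n$, a nonzero $(l,m)$-contribution forces $m/x_l=1$. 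Consequently, for every $(l,m)$ with $m\neq x_l$ the sequence $\bigl((w_n)_{l,m}\bigr)_{n}$ is eventually constant.

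To absorb the remaining ambiguity in the coefficients $(l,x_l)$ (which arises only when $k=0$), I normalize using $E_n:=\sum_{i=1}^n x_i\,\frac{\partial}{\partial x_i}$, which is central in $\mathfrak{gl}_n(\mathbb{F})$ and therefore commutes with $\mathfrak{sl}_n(\mathbb{F})$. Setting $\tilde{w}_n:=w_n-(w_n)_{1,x_1}\,E_n$ preserves the identity $d|_{\mathfrak{sl}_n(\mathbb{F})}=\text{ad}(\tilde{w}_n)$. A direct computation using the explicit form in Lemma~\ref{Lemma3} shows that the increment $(w_{n+1})_{l,x_l}-(w_n)_{l,x_l}$ is the same scalar for every $l\leq n$, so the renormalized coefficient $(\tilde{w}_n)_{l,x_l}=(w_n)_{l,x_l}-(w_n)_{1,x_1}$ is independent of $n$ for $n\geq l$. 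Hence every coefficient of $\tilde{w}_n$ stabilizes, and we may define $w\in\widetilde{W}_k$ by declaring its $(l,m)$-coefficient to be the eventual value of $(\tilde{w}_n)_{l,m}$.

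It remains to verify $\text{ad}(w)=d$. Since $\mathfrak{sl}_\infty(\mathbb{F})$ is generated as a Lie algebra by the elements $x_i\frac{\partial}{\partial x_j}$ with $i\neq j$ (as $x_i\frac{\partial}{\partial x_i}-x_j\frac{\partial}{\partial x_j}=\bigl[x_i\frac{\partial}{\partial x_j},x_j\frac{\partial}{\partial x_i}\bigr]$) and both sides are derivations, it suffices to check equality on such generators. Formula~\eqref{eq2} shows that any fixed $(l,m)$-coefficient of $\bigl[x_i\frac{\partial}{\partial x_j},w\bigr]$ depends on only finitely many coefficients of $w$; choosing $N$ large enough so that those coefficients agree with the corresponding ones of $\tilde{w}_N$ and so that $x_i\frac{\partial}{\partial x_j}\in\mathfrak{sl}_N(\mathbb{F})$, this coefficient equals the corresponding coefficient of $\bigl[x_i\frac{\partial}{\partial x_j},\tilde{w}_N\bigr]=d\bigl(x_i\frac{\partial}{\partial x_j}\bigr)$, as desired. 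The main obstacle of the whole argument is the stabilization step: pinpointing via Lemma~\ref{Lemma3} exactly which coefficients of $w_{n+1}-w_n$ can be nonzero, and identifying the one-parameter residual ambiguity carried by $E_n$ that must be absorbed to secure Tychonoff convergence.
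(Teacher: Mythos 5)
Your proposal is correct and follows essentially the same route as the paper: apply Lemma~\ref{Lemma4} on each $\mathfrak{sl}_n(\mathbb{F})$, normalize the resulting $w_n$ by subtracting a multiple of $\sum_{i=1}^n x_i\frac{\partial}{\partial x_i}$ to kill the residual ambiguity from Lemma~\ref{Lemma3}, observe coefficient-wise stabilization, and pass to the limit. Your closing verification that each coefficient of $\bigl[x_i\frac{\partial}{\partial x_j},w\bigr]$ depends on only finitely many coefficients of $w$ is slightly more explicit than the paper's, but the argument is the same.
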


	\begin{proof} For an element $w \in \widetilde{W}_k$, let $\hat{w}$ denote the derivation defined by $$\hat{w}: x \mapsto xw, \quad  x \in \mathfrak{sl}_\infty(\mathbb{F}).$$ By Lemma \ref{Lemma4}, for any $n \geq 2$, there exists an element $w_n \in \widetilde{W}_k$ such that:
		\[
		(d - \hat{w}_n)(\mathfrak{sl}_n(\mathbb{F})) = (0).
		\]
		
		For an element $w \in \widetilde{W}$, a monomial $m\in X^{*}$ and   an integer $i\geq 1$, let $\gamma_w(m \frac{\partial}{\partial x_i})$ be the coefficient of $m \frac{\partial}{\partial x_i}$ in $w$. We have:
		\[
		\mathfrak{sl}_n(\mathbb{F})\left( \sum_{i=1}^{n} x_i \frac{\partial}{\partial x_i} \right) = (0);
		\] see \eqref{eq4+}.
		Considering elements $$w_n - \gamma_{w_n} \Big(x_1 \frac{\partial}{\partial x_1}\Big) \left( \sum_{i=1}^{n} x_i \frac{\partial}{\partial x_i} \right)$$ instead of $w_n$, we will assume that $$\gamma_{w_{n}}\Big(x_1 \frac{\partial}{\partial x_1}\Big) = 0 \quad \text{for any} \quad n \geq 2.$$
		We claim that for every monomial $m \in X^*$ and every integer $i\geq 1$, the sequence \begin{equation}\label{eq4++} \gamma_{w_2}\Big(m \frac{\partial}{\partial x_i}\Big), \quad \gamma_{w_3}\Big(m \frac{\partial}{\partial x_{i}}\Big), \quad \dots \end{equation} stabilizes. Indeed, let $m \in X^*_n$ and suppose $i<n$. Then $$\mathfrak{sl}_n(\mathbb{F}) \left( w_n - w_{n+1} \right) = (0).$$ Therefore, by Lemma \ref{Lemma3}, we have:
		\[
		w_n - w_{n+1} \in \widetilde{\mathbb{F}[X \setminus X_n]} \Big(\sum_{j=1}^{n} x_j \frac{\partial}{\partial x_j} \Big)+ \sum_{l=n+1}^{\infty} \widetilde{\mathbb{F}[X \setminus X_n]} \frac{\partial}{\partial x_l}.
		\]
		The term $m \frac{\partial}{\partial x_i}$ cannot appear on the right-hand side unless $m = x_i$. Hence, assuming $m \neq x_i$, we obtain::
		\[
		\gamma_{w_n}\Big(m \frac{\partial}{\partial x_i}\Big) = \gamma_{w_{n+1}}\Big(m \frac{\partial}{\partial x_i}\Big).
		\]
		Again, by Lemma \ref{Lemma3}:
		\[
		\gamma_{w_n -w_{n+1}}\Big(x_i \frac{\partial}{\partial x_i}\Big) = \gamma_{w_n -w_{n+1}}\Big(x_1 \frac{\partial}{\partial x_1}\Big)= 0,
		\]
		and sequence \eqref{eq4++} stabilizes.
		
		Define
		\[
		\gamma\left(m \frac{\partial}{\partial x_i}\right) = \lim_{n \to \infty} 
		\gamma_{w_n}\left(m \frac{\partial}{\partial x_i}\right).
		\]
		Consider the element
		\[
		w = \sum_{m \in X^*,\ i \geq 1} \gamma\left(m \frac{\partial}{\partial x_i}\right) 
		m \frac{\partial}{\partial x_i}.
		\]
		From the construction above, it follows that
		\[
		\mathfrak{sl}_n(\mathbb{F})(w - w_n) =( 0).
		\]
		Thus, for all \( a \in \mathfrak{sl}_n(\mathbb{F}) \), we have
		\[
		[a, w] = [a, w_n] = d(a).
		\] This completes the proof of the claim. \end{proof}
	
	\begin{Lemma}\label{Lemma6} The first cohomology group vanishes: $$H^1\big(\mathfrak{sl}_{\infty}(\mathbb{F}), \widetilde{W}\big) = (0).$$ \end{Lemma}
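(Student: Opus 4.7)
The plan is to reduce to Lemma \ref{Lemma5} by splitting off the homogeneous components with respect to the grading of $\widetilde{W}$. Every element $w \in \widetilde{W}$ admits a unique formal decomposition $w = \sum_{\kappa \geq -1} w_\kappa$ with $w_\kappa \in \widetilde{W}_\kappa$, obtained by grouping, inside each $f_l$, the monomials of a fixed length; this exhibits $\widetilde{W}$ as the direct product $\prod_{\kappa \geq -1} \widetilde{W}_\kappa$. Since every generator $x_i\frac{\partial}{\partial x_j}$ of $\mathfrak{sl}_\infty(\mathbb{F})$ is homogeneous of degree $0$, taking brackets with $\mathfrak{sl}_\infty(\mathbb{F})$ preserves the grading, and each projection $\pi_\kappa\colon \widetilde{W}\to\widetilde{W}_\kappa$ is an $\mathfrak{sl}_\infty(\mathbb{F})$-module homomorphism.

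Given a derivation $d\colon \mathfrak{sl}_\infty(\mathbb{F}) \to \widetilde{W}$, each composition $d_\kappa := \pi_\kappa \circ d$ is then a derivation into the graded piece $\widetilde{W}_\kappa$, and Lemma \ref{Lemma5} furnishes, for every $\kappa \geq -1$, an element $w_\kappa \in \widetilde{W}_\kappa$ satisfying $d_\kappa(a) = [a, w_\kappa]$ for all $a \in \mathfrak{sl}_\infty(\mathbb{F})$. I would then assemble these local witnesses into a single element $w := \sum_{\kappa \geq -1} w_\kappa \in \widetilde{W}$; this formal sum makes sense because the $w_\kappa$ are supported on pairwise disjoint sets of basis symbols $m\frac{\partial}{\partial x_i}$, distinguished by the length of $m$. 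For any $a \in \mathfrak{sl}_\infty(\mathbb{F})$, the degree-$\kappa$ homogeneous component of $[a,w]$ coincides with $[a, w_\kappa]$, so $[a,w] = \sum_\kappa [a, w_\kappa] = \sum_\kappa d_\kappa(a) = d(a)$, and $d$ is the restriction of $\text{ad}(w)$ to $\mathfrak{sl}_\infty(\mathbb{F})$.

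The only delicate point is to verify that the bracket with a formal element $w = \sum_\kappa w_\kappa$ can be computed degree by degree without ambiguity; this reduces to applying the commutator formulas \eqref{eq2}--\eqref{eq4} to an element $a$ of $\mathfrak{sl}_\infty(\mathbb{F})$, which involves only finitely many variables, so each such commutator distributes term-by-term across the formal decomposition and respects the grading. Once this routine check is in place, the cohomological conclusion follows immediately.
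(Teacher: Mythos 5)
Your proposal is correct and is essentially the paper's own argument: the paper likewise decomposes $d(a)$ into homogeneous components $d_i(a)\in\widetilde{W}_i$, observes each $a\mapsto d_i(a)$ is a derivation, invokes Lemma \ref{Lemma5} to get witnesses $w_i$, and sets $w=\sum_{i\ge -1}w_i$. Your extra verification that the formal sum is well defined and that the bracket can be computed degree by degree is a point the paper leaves implicit, but the route is the same.
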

	\begin{proof} Let $d:\mathfrak{sl}_{\infty}(\mathbb{F})\to \widetilde{W}$ be a derivation. For an arbitrary element $a\in \mathfrak{sl}_{\infty}(\mathbb{F}),$ the image $d(a)$ is an (infinite) sum of homogeneous components: $d(a)=d_{-1}(a)+d_0(a)+\cdots , $ where $d_i(a)\in \widetilde{W}_i.$ It is easy to  see that for every integer $i \geq -1$, the mapping $a\to d_i(a)$ is a derivation from $\mathfrak{sl}_{\infty}(\mathbb{F})$ to $\widetilde{W}_i$.  By Lemma \ref{Lemma5}, there exists an element $w_i \in \widetilde{W}_i$ such that $d_i(a) = [a, w_i]$ for an arbitrary element $a \in \mathfrak{sl}_\infty(\mathbb{F})$. Thus, $$w = \sum_{i=-1}^{\infty} w_i, \quad d = \hat{w}.$$ This completes the proof of the lemma. \end{proof}
	
	\section{The poof of Theorem \ref{Theo1}}
	
	The Lie algebras $W_{\text{fin}}(\mathbb{F})$ and $W_X(\mathbb{F})$ can be naturally identified with subspaces of $\widetilde{W}$. We denote these subspaces as $\widetilde{W}_{\text{fin}}, \widetilde{W}_X$ respectively. Recall that
	\[
	\widetilde{W}_X = \left\{ \sum_{i=1}^{\infty} f_i \frac{\partial}{\partial x_i} \ \middle| \ f_i \text{ are polynomials in } X \right\}.
	\]

	\begin{Lemma}\label{Lemma7} Let $w \in \widetilde{W}$, and $\mathfrak{sl}_\infty(\mathbb{F}) w \subseteq \widetilde{W}_X$. Then
		\[
		w = a \sum_{i=1}^{\infty} x_i \frac{\partial}{\partial x_i} + w',
		\]
		where $w' \in \widetilde{W}_X$, and for an arbitrary $i$, the partial derivative $\frac{\partial a}{\partial x_i}$ is a polynomial. \end{Lemma}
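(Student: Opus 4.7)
The plan is to unpack the hypothesis $\mathfrak{sl}_\infty(\mathbb{F}) w \subseteq \widetilde{W}_X$ via the commutator formulas \eqref{eq2}, \eqref{eq3} already derived in the paper, and read off structural constraints on the coefficients $f_l$ of $w = \sum_{l \geq 1} f_l \frac{\partial}{\partial x_l}$. The bracket with $x_j \frac{\partial}{\partial x_i}$, for $i \neq j$, must land in $\widetilde{W}_X$, so each $\frac{\partial}{\partial x_l}$-component must be a polynomial. Inspecting \eqref{eq2} component by component gives two families of conditions:
\begin{enumerate}
\item[(i)] for every $l$ and every $i \neq l$ the coefficient $-x_j\,\partial f_l/\partial x_i$ is a polynomial (taking any $j \neq i, l$), so $\partial f_l/\partial x_i \in \mathbb{F}[X]$;
\item[(ii)] from \eqref{eq3}, $f_j - x_j\,\partial f_i/\partial x_i \in \mathbb{F}[X]$ for every $i \neq j$.
\end{enumerate}

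With these in hand, I would set $a_i := \partial f_i/\partial x_i$ and extract a common "leading series." Applying (ii) with two different choices $i_1, i_2$ (both distinct from $j$) shows that $x_j(a_{i_1} - a_{i_2})$ is a polynomial, hence $a_{i_1} - a_{i_2} \in \mathbb{F}[X]$ for all $i_1 \neq i_2$; choosing $j$ away from any given pair handles all cases. Define $a := a_1 = \partial f_1/\partial x_1$. Then $a_i - a$ is a polynomial for every $i$, and (ii) applied with $i = 1$ (and $j \neq 1$ arbitrary) gives $f_j = x_j a + q_j$ with $q_j \in \mathbb{F}[X]$. For the missing index $j = 1$ one uses (ii) in the form $f_1 - x_1 a_i \in \mathbb{F}[X]$ for any $i \neq 1$, together with $a_i = a + (\text{polynomial})$, to obtain $f_1 = x_1 a + q_1$ with $q_1 \in \mathbb{F}[X]$. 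Then
\[
w = a \sum_{i=1}^{\infty} x_i \frac{\partial}{\partial x_i} + w', \qquad w' := \sum_{i=1}^{\infty} q_i \frac{\partial}{\partial x_i} \in \widetilde{W}_X,
\]
which is the desired decomposition.

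It remains to verify that $\partial a/\partial x_i$ is a polynomial for every $i$. For $i \neq 1$, condition (i) applied to $l = 1$ gives $\partial f_1/\partial x_i \in \mathbb{F}[X]$, and commuting partials yields $\partial a/\partial x_i = \partial/\partial x_1(\partial f_1/\partial x_i) \in \mathbb{F}[X]$. For $i = 1$, one does not have this directly from $a = a_1$, and this is the one step requiring a small trick: use $a = a_2 + (\text{polynomial})$, so $\partial a/\partial x_1 = \partial a_2/\partial x_1 + (\text{polynomial}) = \partial/\partial x_2(\partial f_2/\partial x_1) + (\text{polynomial})$, which is polynomial by (i) applied to $l = 2, i = 1$.

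I expect no serious obstacle: the whole argument is essentially a careful reading of \eqref{eq2}--\eqref{eq3}, followed by the elementary observation that the "diagonal" series $a_i = \partial f_i/\partial x_i$ all agree modulo $\mathbb{F}[X]$. The only spot that needs attention is making sure $\partial a/\partial x_1$ is polynomial (not only $\partial a/\partial x_i$ for $i \neq 1$), which is handled by switching the reference index from $1$ to $2$. No appeal to Lemma~\ref{Lemma3} or Corollary~\ref{cor2} is needed here — those describe the kernel, whereas this lemma concerns the preimage of $\widetilde{W}_X$.
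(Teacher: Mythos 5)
Your proof is correct, and it follows the same basic strategy as the paper's: read off from \eqref{eq2} that the off-diagonal partials $\partial f_l/\partial x_i$ ($i\neq l$) are polynomials, and from \eqref{eq3} that $f_j-x_j\,\partial f_i/\partial x_i$ is a polynomial, then assemble the common series $a$. The execution differs in two small but genuine ways. First, the paper also invokes \eqref{eq4} (the bracket with $x_i\frac{\partial}{\partial x_i}-x_j\frac{\partial}{\partial x_j}$) to get that $f_i-x_i\,\partial f_i/\partial x_i$ is a polynomial, and then runs a somewhat fiddly splitting $f_i=x_ih_i+\bar f_i$, $h_i=h_i'+h_i''$, to reach $f_i=x_ig_i+p_i$ with $\partial g_i/\partial x_i=0$; its $a=g_1$ then satisfies $\partial a/\partial x_1=0$ automatically. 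You avoid \eqref{eq4} and the splitting entirely by taking $a_i=\partial f_i/\partial x_i$ and observing that \eqref{eq3} for two reference indices forces $x_j(a_{i_1}-a_{i_2})$, hence $a_{i_1}-a_{i_2}$, to be a polynomial; the price is that your $a=a_1$ need not kill $\partial/\partial x_1$, which you correctly repair by switching the reference index to $2$. Both arguments are sound; yours is a bit more economical (only the off-diagonal generators $x_j\frac{\partial}{\partial x_j x_i}$-type elements are used), while the paper's normalization $\partial g_i/\partial x_i=0$ makes the final polynomiality of $\partial a/\partial x_i$ slightly more immediate. The two resulting series $a$ differ by a polynomial, so the conclusions agree.
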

	
	\begin{proof} Let $$w = \sum_{i=1}^{\infty} f_i \frac{\partial}{\partial x_i}, \quad \text{where} \quad f_i \in \widetilde{\mathbb{F}[X]}.$$ From \eqref{eq2}, it immediately follows that for any $1\leq   i \neq j < \infty$, the partial derivative $\frac{\partial f_i}{\partial x_j}$ is a polynomial.

		The $\frac{\partial}{\partial x_i}$-component of the right-hand side of \eqref{eq4} is the following: 
		\[
		f_i \frac{\partial}{\partial x_i} + \left( \frac{\partial f_i}{\partial x_j} x_j - \frac{\partial f_i}{\partial x_i} x_i \right) \frac{\partial}{\partial x_i}.
		\]
		Hence, 
		\[
		f_i - \frac{\partial f_i}{\partial x_i} x_i 
		\] is also a polynomial.
		
		Let $$f_i = x_i \cdot h_i +\bar{f}_i, \quad \text{where} \quad  \bar{f}_i \in \mathbb{F}[X \setminus \{x_i\}], \quad h_i \in \mathbb{F}[X].$$ Thus, $$ f_i - \frac{\partial f_i}{\partial x_i} x_i =   x_i \cdot h_i + \bar{f}_i -\Big(h_i + x_i \frac{\partial h_i}{\partial x_i}\Big)x_i = \bar{f}_i  - x_i^2 \ \frac{\partial h_i}{\partial x_i}$$ is a polynomial. It implies that $\bar{f}_i$ is also a polynomial. Let $h_i = h_i' + h_i''$, where $h_i'$ is the sum of monomials involving $x_i$, and 
		\[
		\frac{\partial h_i''}{\partial x_i} = 0.
		\] It follows from the above that $h_i'$ is also a polynomial. Then, summarizing, we get that:
		$f_i = x_i \cdot g_i + p_i,$  where $ p_i $  is a polynomial and $\frac{\partial g_i}{\partial x_i} = 0.$
		
		Remark that
		\[
		w - \sum_{i=1}^\infty x_i \ g_i \ \frac{\partial}{\partial x_i} \in \widetilde{W}_X.
		\]
		Therefore, consider $ \sum_{i=1}^\infty x_i \ g_i \ \frac{\partial}{\partial x_i} $ instead of \( w \), that is, from now on 
		\[
		w := \sum_{i=1}^\infty x_i g_i \frac{\partial}{\partial x_i}.
		\] It follows from \eqref{eq3} that 
		\[
		x_j g_j - x_
		j \frac{\partial (x_i g_i)}{\partial x_i} = x_j (g_j - g_i)
		\]
		is a polynomial.
		
		Let \( a = h_1 \). Then for an arbitrary \( i \), we have $ g_i = a + (\text{a polynomial}).$ Now
		\[
		w \in a \sum_{i=1}^\infty x_i \frac{\partial}{\partial x_i} + \widetilde{W}_X.
		\] Since $$  \frac{\partial (a x_j)}{\partial  x_i}, \quad  i \neq j,$$ is a polynomial, it follows that 
		$\frac{\partial a}{\partial x_i}$ is a polynomial. This completes the proof of the lemma. 
	\end{proof}

	Consider the subalgebra 
	\[
	L := \sum_{i=1}^\infty \mathbb{F} \frac{\partial}{\partial x_i} + \mathfrak{gl}_\infty(\mathbb{F})
	\]
	of  the algebra $ W_\infty(\mathbb{F}).$
	
	\begin{Lemma}\label{Lemma8} The first cohomology group vanishes: $$H^1(L, \widetilde{W}_X) = (0).$$ \end{Lemma}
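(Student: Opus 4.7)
The plan is to reduce the problem along the filtration $\mathfrak{sl}_\infty(\mathbb{F})\subset\mathfrak{gl}_\infty(\mathbb{F})\subset L$ and successively correct the candidate element giving the inner derivation. Throughout I set $E:=\sum_{i=1}^\infty x_i\,\partial/\partial x_i\in\widetilde{W}_X$ and $P:=\sum_{i=1}^\infty\mathbb{F}\,\partial/\partial x_i$, the abelian ideal of $L$.

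First I would apply Lemma~\ref{Lemma6} to $d|_{\mathfrak{sl}_\infty(\mathbb{F})}$, producing $w\in\widetilde{W}$ with $d(a)=[a,w]$ for all $a\in\mathfrak{sl}_\infty(\mathbb{F})$. Since $[\mathfrak{sl}_\infty(\mathbb{F}),w]\subseteq\widetilde{W}_X$, Lemma~\ref{Lemma7} decomposes $w=\alpha E+w'$ with $w'\in\widetilde{W}_X$ and $\partial\alpha/\partial x_i\in\mathbb{F}[X]$ for every $i$. Because $[L,\widetilde{W}_X]\subseteq\widetilde{W}_X$, the map $d':=d-\text{ad}(w')$ is still a derivation $L\to\widetilde{W}_X$, and on $\mathfrak{sl}_\infty(\mathbb{F})$ it coincides with $\text{ad}(\alpha E)$, i.e.\ $d'(x_i\,\partial/\partial x_j)=x_i(\partial\alpha/\partial x_j)\,E$ for $i\ne j$.

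The crucial step---what I expect to be the main obstacle---is to promote $\alpha$ to a polynomial, since $\alpha E\in\widetilde{W}_X$ only in that case. I would do this by writing $d'(\partial/\partial x_k)=\sum_{l\geq 1}g_{k,l}\,\partial/\partial x_l$ with $g_{k,l}\in\mathbb{F}[X]$ and expanding the Leibniz identity applied to $[x_i\,\partial/\partial x_j,\partial/\partial x_k]=0$ for pairwise distinct $i,j,k$. Comparing the $\partial/\partial x_k$-components produces
\[
\frac{\partial g_{k,k}}{\partial x_j}=\frac{\partial}{\partial x_j}\Bigl(x_k\frac{\partial\alpha}{\partial x_k}+\alpha\Bigr),\qquad j\ne k,
\]
so $g_{k,k}-x_k\,\partial\alpha/\partial x_k-\alpha$ depends only on $x_k$, i.e.\ $\alpha+f_k(x_k)\in\mathbb{F}[X]$ for some $f_k\in\widetilde{\mathbb{F}[x_k]}$. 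Choosing two distinct indices $k_1\ne k_2$ and subtracting, $f_{k_1}(x_{k_1})-f_{k_2}(x_{k_2})\in\mathbb{F}[X]$; isolating the monomials involving only $x_{k_1}$ (respectively only $x_{k_2}$) shows each $f_k$ is itself a polynomial, whence $\alpha\in\mathbb{F}[X]$.

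Once $\alpha$ is a polynomial, $v_0:=w'+\alpha E\in\widetilde{W}_X$ and $\tilde d:=d-\text{ad}(v_0)$ vanishes on $\mathfrak{sl}_\infty(\mathbb{F})$. The derivation identity against $\mathfrak{sl}_\infty(\mathbb{F})$ shows that $\tilde d(\partial/\partial x_k)$ is centralized by the copy of $\mathfrak{sl}_\infty$ acting on the variables other than $x_k$; Lemma~\ref{Lemma3}, applied after renumbering the variables so that $x_k$ plays the role of an ``outside'' variable, then constrains $\tilde d(\partial/\partial x_k)$ to $p_k(x_k)\bigl(E-x_k\,\partial/\partial x_k\bigr)+q_k(x_k)\,\partial/\partial x_k$ with $p_k,q_k\in\mathbb{F}[x_k]$. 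Exploiting the further brackets $[x_i\,\partial/\partial x_k,\partial/\partial x_k]=0$ (for $i\ne k$) and $[x_k\,\partial/\partial x_j,\partial/\partial x_k]=-\partial/\partial x_j$ collapses this expression to $\tilde d(\partial/\partial x_k)=\mu\,\partial/\partial x_k$ for one scalar $\mu\in\mathbb{F}$ independent of $k$. Replacing $v_0$ by $v_0+\mu E\in\widetilde{W}_X$ kills the derivation on $P$ as well. It remains to handle the representative $x_1\,\partial/\partial x_1$ of $\mathfrak{gl}_\infty(\mathbb{F})/\mathfrak{sl}_\infty(\mathbb{F})$: since $\text{ad}(x_1\,\partial/\partial x_1)$ preserves $\mathfrak{sl}_\infty(\mathbb{F})$, the Leibniz rule forces the image of $x_1\,\partial/\partial x_1$ to centralize $\mathfrak{sl}_\infty(\mathbb{F})$, and hence, by Corollary~\ref{cor2}, to lie in $\mathbb{F} E$; pairing with $\partial/\partial x_1$ and using the already-established vanishing on $P$ forces this scalar to be zero, completing the proof.
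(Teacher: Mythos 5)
Your proposal is correct and follows the same skeleton as the paper's proof: Lemma \ref{Lemma6} and Lemma \ref{Lemma7} produce $w = aE + w'$, one shows $a$ is a polynomial by playing vanishing brackets in $L$ against the polynomiality of the values of $d$, and the corrected derivation is then killed successively on $\sum_i \mathbb{F}\,\partial/\partial x_i$ and on the trace part of $\mathfrak{gl}_\infty(\mathbb{F})$. The only differences are tactical, and all of them check out: you establish the polynomiality of $a$ directly from $[x_i\,\partial/\partial x_j,\partial/\partial x_k]=0$ rather than by the paper's contradiction argument built on $[\partial/\partial x_i, x_p\,\partial/\partial x_q]=0$, you pin down $\tilde d(\partial/\partial x_k)$ via the centralizer description of Lemma \ref{Lemma3} instead of the direct component computation \eqref{eq5}, and you dispose of the trace part with the single representative $x_1\,\partial/\partial x_1$ and Corollary \ref{cor2} instead of treating every $x_i\,\partial/\partial x_i$.
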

	
	\begin{proof} Let \( d : L \to \widetilde{W}_X \) be a derivation. Let's show that it is inner. By Lemma \ref{Lemma6}, there exists an element \( w \in \widetilde{W} \) such that
		\[
		d(x) = [w, x] \quad \text{for any } x \in \mathfrak{sl}_\infty(\mathbb{F}).
		\] By Lemma \ref{Lemma7}, $$ w = a \sum_{i=1}^\infty x_i \frac{\partial}{\partial x_i} + w',$$ where $ w' \in \widetilde{W}_X , $ and  $\partial a / {\partial x_i}$ is a polynomial for every  $i. $

		Take \( 1 \leq i \not= p < \infty \), $p\not= q.$ Then
		\[
		\left[ \frac{\partial}{\partial x_i}, x_p \frac{\partial}{\partial x_q} \right] = 0.
		\] Applying the derivation \( d \), we get:
		\[
		\left[ d\left( \frac{\partial}{\partial x_i}\right), \ x_p \frac{\partial}{\partial x_q} \right]
		+ \left[ \frac{\partial}{\partial x_i}, \Big[w, x_p \frac{\partial}{\partial x_q}\Big] \right]
		= \] \[\left[ d\left( \frac{\partial}{\partial x_i} \right) + \Big[\frac{\partial}{\partial x_i}, w\Big], \  x_p \frac{\partial}{\partial x_q} \right] = 0.
		\]
		
		Let
		\[
		d\left( \frac{\partial}{\partial x_i} \right) = \sum_{k=1}^\infty u_k \frac{\partial}{\partial x_k},\] where  $ u_k$  are polynomials.  We obtain: 
		\[
		\left[ \frac{\partial}{\partial x_i}, w \right] = \left[ \frac{\partial}{\partial x_i}, \ a\sum_{j=1}^\infty x_j \frac{\partial}{\partial x_j} +   w' \right]=
		\]
		\[ \left( \frac{\partial a}{\partial x_i}\right)  \sum_{j=1}^\infty x_j \frac{\partial}{\partial x_j} + a  \frac{\partial }{\partial x_i} + \left[ \frac{\partial}{\partial x_i}, w' \right].  \]

		Let
		\[
		\left[ \frac{\partial}{\partial x_i}, w' \right] = \sum_k v_k \frac{\partial}{\partial x_k}, \] where   $v_k$ are polynomials. The \( \frac{\partial}{\partial x_i} \)-component of the element \[   d\left(\frac{\partial}{\partial x_i}\right) +\left[  \frac{\partial}{\partial x_i}, w\right] \quad \text{is} \quad 
		\left(u_i + \left(\frac{\partial a}{\partial x_i}\right) x_i + a + v_i\right) \frac{\partial}{\partial x_i}.
		\]

		Suppose that the element $a$ is not a polynomial. Since \[  u_i + \left(\frac{\partial a}{\partial x_i} \right)x_i + v_i \] is a polynomial and every variable is involved in finitely many monomials in \( a \), it follows that there exists \( q \neq i \) such that
		\[
		\frac{\partial}{\partial x_q}\left(u_i + \left(\frac{\partial a}{\partial x_i}\right) x_i + a + v_i\right) \neq 0.
		\]
		
		Let $p \neq i,$ and $ p \neq q.$  Then the element
		\[
		\left[  d\left( \frac{\partial}{\partial x_i} \right) + \left[ \frac{\partial}{\partial x_i}, w \right] , \ x_p \frac{\partial}{\partial x_q}\right]
		\]
		has a nonzero \( \frac{\partial}{\partial x_i} \)-component. We got a contradiction. Thus, the element $a$ is a polynomial, and $w\in \widetilde{W}_X.$

		Now, considering the derivation $d - \hat{w}$ instead of $d$, we assume that $d(\mathfrak{sl}_\infty(\mathbb{F})) = (0).$ Let
		\[
		d\left(\frac{\partial}{\partial x_i}\right) = \sum_{j=1}^{\infty} f_j \frac{\partial}{\partial x_j}, \] where each $ f_j $ is a polynomial in $X.$ Arguing as above, we see that for $p \neq i$ and $q \neq p$:
		\begin{equation}\label{eq5}
			\left[ \sum_{j=1}^{\infty} f_j  \frac{\partial}{\partial x_j}, \ x_p \frac{\partial}{\partial x_q} \right] = 
			f_p \frac{\partial}{\partial x_q} - x_p  \sum_{j=1}^{\infty} \ \frac{\partial f_j}{\partial x_q} \ \frac{\partial}{\partial x_j} = 0 .
		\end{equation}
		Fix $j \geq 1$. Choosing $q \neq j$, we get $\frac{\partial f_j}{\partial x_q} = 0.$  Hence,
		$f_j = f(x_j)$ is a polynomial in $x_j.$
		
		Now the right-hand side of \eqref{eq5} looks as
		\[
		f_p  \frac{\partial}{\partial x_q} - x_p  \frac{\partial f_p}{\partial x_q}\  \frac{\partial}{\partial x_p} = 0,
		\]
		which implies
		\[
		f_p(x_p) = x_p  \frac{\partial f_q(x_q)}{\partial x_q}.
		\] It implies that $f_q(x_q) = \alpha_q + \beta_q x_q$, for $\alpha_q, \beta_q \in \mathbb{F},$ and $f_p(x_p) =  \beta_q x_p.$  Hence, there exists $\beta \in \mathbb{F}$ such that for every $p\not= i, $  we have 
		$f_p = \beta x_p.$ Choosing $q = i$, we get $f_i = \alpha + \beta x_i$. Thus,
		\[
		d\left( \frac{\partial}{\partial x_i} \right) = \alpha_i  \frac{\partial}{\partial x_i} + \beta_i  \sum_{j=1}^{\infty} x_j  \frac{\partial}{\partial x_j}, \quad \text{where} \quad \alpha_i, \ \beta_i \in \mathbb{F}.
		\]
		
		Let $1 \leq i \neq j < \infty$. Applying the derivation $d$ to $[\frac{\partial}{\partial x_i}, \frac{\partial}{\partial x_j}] = 0$, we get:
		\[
		\left[ \alpha_i   \frac{\partial}{\partial x_i} + \beta_i  \sum_{k=1}^{\infty}  x_k  \frac{\partial}{\partial x_k}, \ \frac{\partial}{\partial x_j} \right] + \left[ \frac{\partial}{\partial x_i}, \ \alpha_j  \frac{\partial}{\partial x_j} + \beta_j  \sum_{k=1}^{\infty}  x_k  \frac{\partial}{\partial x_k} \right] = 
		\]
		\[ \beta_j  \frac{\partial}{\partial x_i} -\beta_i \frac{\partial}{\partial x_j} = 0,
		\]
		which implies $\beta_i = \beta_j = 0$.
		Hence, $$d\left(\frac{\partial}{\partial x_i}\right) = \alpha_i   \frac{\partial}{\partial x_i} \quad \text{for every} \quad  i \geq 1.$$

		Applying the derivation $d$ to
		\[
		\left[ \frac{\partial}{\partial x_i}, \ x_i  \frac{\partial}{\partial x_q} \right] = \frac{\partial}{\partial x_q}, \quad i \neq q,
		\] we get: $$ \alpha_i   \frac{\partial}{\partial x_q}= \alpha_q   \frac{\partial}{\partial x_q}.$$ Hence $ \alpha_i =\alpha$  for all $ i\geq 1. $ Now
		\[
		d\left( \frac{\partial}{\partial x_i} \right) = \alpha  \frac{\partial}{\partial x_i} = \left[ \frac{\partial}{\partial x_i}, \ \alpha \sum_{k=1}^{\infty}  x_k  \frac{\partial}{\partial x_k} \right].
		\]

		Replace \( d \) with $  d - \widehat{u} ,$ where $$ u= \alpha \sum_{k=1}^\infty x_k \frac{\partial}{\partial x_k}.$$
		Since
		\[
		\left[\mathfrak{sl}_\infty(\mathbb{F}), \ \sum_{k=1}^\infty x_k \frac{\partial}{\partial x_k}\right] = (0),
		\]
		we still have \( d(\mathfrak{sl}_\infty(\mathbb{F})) = (0) \),  and in addition to that,
		\[
		d\left( \frac{\partial}{\partial x_i} \right) = 0 \quad \text{for any } i \geq 1.
		\]
		
		We have:
		\[
		\mathfrak{gl}_\infty(\mathbb{F}) = \sum_{i=1}^\infty \mathbb{F} x_i \frac{\partial}{\partial x_i} + \mathfrak{sl}_\infty(\mathbb{F}).
		\] Let us prove that \[
		d\left( x_i \frac{\partial}{\partial x_i}\right) = 0.
		\] 
		
		For any \( p \geq 1 \), we compute
		\[
		\left[ \frac{\partial}{\partial x_p}, \ x_i \frac{\partial}{\partial x_i} \right] =
		\begin{cases}
			\frac{\partial}{\partial x_i}, & \text{if } p = i, \\
			0, & \text{if } p \neq i.
		\end{cases}
		\] In both cases,
		\[
		d\left( \left[ \frac{\partial}{\partial x_p}, \ x_i \frac{\partial}{\partial x_i} \right] \right) = 0.
		\]
		Since $$ d\left( \frac{\partial}{\partial x_p} \right) = 0,  \quad \text{then} \quad 
		\left[ \frac{\partial}{\partial x_p},\  d \left( x_i \frac{\partial}{\partial x_i} \right) \right] = 0.$$
		Assume
		\[
		d \left( x_i \frac{\partial}{\partial x_i} \right) = \sum_{k=1}^{\infty} a_k \frac{\partial}{\partial x_k}.
		\]
		For any \( k \geq 1 \), we have \( \frac{\partial}{\partial x_p} a_k = 0 \). Hence, \( a_k \in \mathbb{F} \). From the fact that 
		\[
		d \left( x_i \frac{\partial}{\partial x_i} - x_j \frac{\partial}{\partial x_j} \right) = 0,
		\]
		it follows that the element 
		\[
		\sum_{k = 1}^{\infty} a_k \frac{\partial}{\partial x_k}
		\]
		does not depend on the choice of the index  \( i \).
		
		Let \( q \ne i \). Then
		\[
		\left[ x_i \frac{\partial}{\partial x_i}, \ x_i \frac{\partial}{\partial x_q} \right] = x_i \frac{\partial}{\partial x_q}.
		\] Applying the derivation \( d \), we obtain:
		\[
		\left[ \sum_{k=1}^\infty a_k \frac{\partial}{\partial x_k}, \ x_i \frac{\partial}{\partial x_q} \right] = 0.
		\] Consequently, \( a_i = 0 \), which completes the proof that
		\[
		d(\mathfrak{gl}_\infty(\mathbb{F})) = (0).
		\]
		
		Thus, \[
		d(a) = \left[a, \ \alpha  \sum_{k=1}^\infty  x_k  \frac{\partial}{\partial x_k}\right] = 0 \quad \text{for all} \quad a \in \mathfrak{gl}_\infty(\mathbb{F}).
		\] The element $$\alpha  \sum_{k=1}^\infty x_k  \frac{\partial}{\partial x_k}$$ lies in $\widetilde{W}_{\text{fin}}.$ Thus, we proved that $d$ is an inner derivation. This completes the proof of the lemma. 
	\end{proof}

	\begin{Lemma}\label{Lemma9} If $w \in \widetilde{W}_X$ and $\mathfrak{sl}_\infty(\mathbb{F}) w \subseteq \widetilde{W}_{\text{fin}}$, then $w \in \widetilde{W}_{\text{fin}}$. \end{Lemma}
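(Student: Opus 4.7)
The plan is to extract the finiteness condition directly from commutators $[w,\, x_j \tfrac{\partial}{\partial x_i}]$, exploiting the fact that each off-diagonal generator $x_j \tfrac{\partial}{\partial x_i}$ (with $i \neq j$) lies in $\mathfrak{sl}_\infty(\mathbb{F})$, together with the very explicit form of such commutators supplied by equation \eqref{eq2}.

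Write $w = \sum_{l=1}^{\infty} f_l \tfrac{\partial}{\partial x_l}$ with each $f_l \in \mathbb{F}[X]$, and fix any pair of distinct indices $i, j$. By \eqref{eq2},
\[
\Big[w,\, x_j \tfrac{\partial}{\partial x_i}\Big] \;=\; f_j \tfrac{\partial}{\partial x_i} \;-\; \sum_{l=1}^{\infty} x_j \tfrac{\partial f_l}{\partial x_i} \tfrac{\partial}{\partial x_l}.
\]
The key point is that for every $l \neq i$ the $l$-th coefficient carries $x_j$ as an explicit factor, so it contains the variable $x_j$ if and only if $\tfrac{\partial f_l}{\partial x_i} \neq 0$, which happens exactly when $x_i$ appears in $f_l$.

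Since by hypothesis this commutator lies in $\widetilde{W}_{\text{fin}}$, the variable $x_j$ can occur in only finitely many of the coefficients of $\tfrac{\partial}{\partial x_l}$. Combined with the observation above, this forces the set $\{\, l : x_i \text{ occurs in } f_l\,\}$ to be finite. Because a partner $j \neq i$ is always available, the argument applies to every index $i$, so each variable $x_i$ occurs in only finitely many of the $f_l$; this is precisely the definition of $w \in \widetilde{W}_{\text{fin}}$. I do not anticipate any real obstacle here: the statement drops out of a single application of \eqref{eq2} once one notes the role of the explicit factor $x_j$ in the $l \neq i$ coefficients.
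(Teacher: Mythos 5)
Your argument is correct and coincides with the paper's own proof: the authors likewise apply the commutator formula \eqref{eq2} (in the equivalent form \eqref{eq5}) to $[w, x_p\tfrac{\partial}{\partial x_q}]$ and observe that the explicit factor $x_p$ in the $\tfrac{\partial}{\partial x_j}$-components forces $x_q$ to occur in only finitely many $f_j$. No issues to report.
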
 
	
	\begin{proof} Let $$w = \sum_{j=1}^{\infty} f_j \frac{\partial}{\partial x_j}\in \widetilde{W}_X.$$ Suppose $p \neq q$. By \eqref{eq5}, we get:
		\[
		w' = f_p \frac{\partial}{\partial x_q}- x_p \sum_{j=1}^{\infty} \left( \frac{\partial f_j}{\partial x_q}\right) \frac{\partial}{\partial x_j} \in \widetilde{W}_{\text{fin}}.
		\] If the variable $x_q$ is involved in $f_j$ for $j \neq q$, then the variable $x_p$ is involved in the $\frac{\partial}{\partial x_j}$-component of $w'$. Since $w' \in \widetilde{W}_{\text{fin}}$, it follows that $x_q$ is involved in finitely many $f_j$'s. This completes the proof of the lemma. \end{proof}
	
	It remains to prove the following lemma.
	\begin{Lemma}\label{Lemma10}  Let $d : W_\infty(\mathbb{F}) \to \widetilde{W}$ be a derivation such that
		\[
		d\left( \sum_{i=1}^{\infty} \mathbb{F}  \frac{\partial}{\partial x_i} + \mathfrak{gl}_\infty(\mathbb{F}) \right) = (0).
		\]
		Then \( d = 0 \). \end{Lemma}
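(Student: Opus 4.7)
The plan is to prove $d=0$ by induction on the degree $k$ in the natural grading $W_\infty(\mathbb{F})=\bigoplus_{k\ge -1}W_{\infty,k}$, where $W_{\infty,k}$ is spanned by vector fields $m\,\frac{\partial}{\partial x_l}$ with $m$ a monomial of degree $k+1$. The base cases $k=-1$ and $k=0$ are immediate since $W_{\infty,-1}=\sum_i\mathbb{F}\,\frac{\partial}{\partial x_i}$ and $W_{\infty,0}=\mathfrak{gl}_\infty(\mathbb{F})$ both lie in the subalgebra on which $d$ vanishes by hypothesis. Assume $d(W_{\infty,j})=(0)$ for $-1\le j\le k-1$ with $k\ge 1$.

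First I would show that $d(W_{\infty,k})$ is contained in the space of ``constant vector fields'' $\widetilde{V^*}:=\sum_l\mathbb{F}\,\frac{\partial}{\partial x_l}\subseteq\widetilde{W}$. For any $p\ge 1$ and $w\in W_{\infty,k}$, the bracket $\bigl[\frac{\partial}{\partial x_p},w\bigr]$ lies in $W_{\infty,k-1}$, so the hypothesis $d\bigl(\frac{\partial}{\partial x_p}\bigr)=0$ together with the inductive hypothesis yields $\bigl[\frac{\partial}{\partial x_p},d(w)\bigr]=d\bigl(\bigl[\frac{\partial}{\partial x_p},w\bigr]\bigr)=0$. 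Writing $d(w)=\sum_l g_l\,\frac{\partial}{\partial x_l}$ with $g_l\in\widetilde{\mathbb{F}[X]}$, this forces $\frac{\partial g_l}{\partial x_p}=0$ for every $p$, and in characteristic zero an element of $\widetilde{\mathbb{F}[X]}$ killed by all partial derivatives is a scalar, so $g_l\in\mathbb{F}$.

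The induction is then finished by a representation-theoretic argument. Because $d$ vanishes on $\mathfrak{gl}_\infty(\mathbb{F})$, the restriction $d|_{W_{n,k}}\colon W_{n,k}\to\widetilde{V^*}$ is $\mathfrak{gl}_n(\mathbb{F})$-equivariant for every $n$. Every weight of $W_{n,k}$ has the form $\sum_i a_i e_i-e_j$ with $\sum_i a_i=k+1$, so the sum of its coordinates equals $k\ge 1$; since $W_{n,k}$ is finite-dimensional and $\mathrm{char}\,\mathbb{F}=0$, it is semisimple and every irreducible composition factor has this same coordinate sum $k$. On the other hand, $\widetilde{V^*}$ splits as a $\mathfrak{gl}_n(\mathbb{F})$-module as $V^*_n\oplus T$, where $V^*_n=\sum_{l=1}^n\mathbb{F}\,\frac{\partial}{\partial x_l}$ is the natural dual module (irreducible, weights of coordinate sum $-1$) and $T=\prod_{l>n}\mathbb{F}\,\frac{\partial}{\partial x_l}$ is a direct product of copies of the trivial module (coordinate sum $0$). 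Hence every finite-dimensional $\mathfrak{gl}_n(\mathbb{F})$-submodule of $\widetilde{V^*}$, in particular the image $d(W_{n,k})$, has irreducible summands only among $V^*_n$ and the trivial module; since $W_{n,k}$ has no composition factor of coordinate sum $-1$ or $0$ (as $k\ge 1$), the image must vanish. Taking the union over $n$ yields $d(W_{\infty,k})=0$, completing the induction. The main subtlety is that $\widetilde{V^*}$ is infinite-dimensional, but finite-dimensional $\mathfrak{gl}_n(\mathbb{F})$-submodules of $\widetilde{V^*}$ are semisimple with the restricted composition structure above, which is all that is needed.
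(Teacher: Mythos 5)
Your proof is correct. It shares its skeleton with the paper's argument: both induct on the degree of a homogeneous vector field (the paper phrases this as induction on the length of a monomial $m$ in $m\,\frac{\partial}{\partial x_i}$), and both begin the inductive step by bracketing with the elements $\frac{\partial}{\partial x_p}$ to force the coefficients of $d(w)$ to be constants. Where you diverge is in killing those constants. The paper proceeds by hand: it brackets $d\big(m\frac{\partial}{\partial x_i}\big)$ with suitable elements $x_p\frac{\partial}{\partial x_q}$ (choosing $q$ so that $x_q$ does not occur in $m$) to annihilate all components except $\alpha\frac{\partial}{\partial x_i}$, and then disposes of $\alpha$ by two further explicit bracket identities, split into the cases $x_i\mid m$ and $x_i\nmid m$. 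You instead observe that $d$ restricted to $W_{n,k}$ is $\mathfrak{gl}_n(\mathbb{F})$-equivariant and compare central characters: the grading element $\sum_{i=1}^n x_i\frac{\partial}{\partial x_i}$ acts by the scalar $k\ge 1$ on $W_{n,k}$ but only with eigenvalues $-1$ and $0$ on the constant vector fields, so the image must vanish in characteristic zero. This is cleaner and avoids the case analysis; indeed you do not even need semisimplicity of $W_{n,k}$ or the full composition-factor language --- the eigenvalue of the central element alone suffices, since equivariance gives $k\,d(w)=\big[\sum_{i\le n}x_i\frac{\partial}{\partial x_i},\,d(w)\big]$ while the right-hand side has all eigenvalue-$(-1)$ or eigenvalue-$0$ components. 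Two cosmetic points: the space of constant vector fields inside $\widetilde{W}$ is the full product $\prod_l\mathbb{F}\,\frac{\partial}{\partial x_l}$ rather than the direct sum you first write (your own $T=\prod_{l>n}\mathbb{F}\,\frac{\partial}{\partial x_l}$ already acknowledges this), and the blanket assertion that finite-dimensional $\mathfrak{gl}_n$-modules in characteristic zero are semisimple needs the caveat that the center act semisimply --- true here because $W_{n,k}$ is a weight module, and in any case not needed for your conclusion.
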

	
	\begin{proof} Let \( m \) be a monomial. We will use induction on the length of the monomial \( m \) and show that
		\[
		d\left( m \frac{\partial}{\partial x_i} \right) = 0.
		\]  
		If the length is $0$ or $1,$ then everything clearly follows from the assumption. Suppose that  the length of \( m \) be greater than or equal to  \(  2 \).  
		Let
		\[
		d\left( m \frac{\partial}{\partial x_i} \right) = \sum_{j=1}^\infty f_j \frac{\partial}{\partial x_j}.
		\]  
		By the induction assumption, for any \( k \geq 1 \), we have:
		\[
		d\left( \Big[m \frac{\partial}{\partial x_i}, \  \frac{\partial}{\partial x_k}\Big] \right) 
		= \sum_{j=1}^\infty   \Big( \frac{\partial f_j}{\partial x_k} \Big) \frac{\partial}{\partial x_j}  = 0.
		\]  
		This implies that \( f_j \in \mathbb{F} \) for any \( j \geq 1 \).
		
		Now let \( p \neq i \). Choose \( q \geq 1 \) such that \( p \neq q \) and  \(  x_q \) does not appear  in the monomial \( m \). Then we have that 
		\[
		\left[m \frac{\partial}{\partial x_i}, \ x_p \frac{\partial}{\partial x_q}\right] = 0,
		\]
		 which implies the following equality:
		\begin{align*}
			d\left( \Big[m \frac{\partial}{\partial x_i}, x_p \frac{\partial}{\partial x_q}\Big] \right) 
			&= \Big[d\Big(m \frac{\partial}{\partial x_i}\Big),  x_p \frac{\partial}{\partial x_q}\Big] + \Big[m \frac{\partial}{\partial x_i},  d\Big(x_p \frac{\partial}{\partial x_q}\Big)\Big] = 0.
		\end{align*}
		
		Moreover, 
		\[ x_p \frac{\partial}{\partial x_q} \in \mathfrak{sl}_\infty(\mathbb{F}),
		\quad \text{so} \quad d\left(x_p \frac{\partial}{\partial x_q}\right) = 0.
		\]
		Consequently, 
		\[
		\left[d\left(m \frac{\partial}{\partial x_i}\right), \ x_p \frac{\partial}{\partial x_q}\right] = \left[ \sum_{j=1}^{\infty} f_j \frac{\partial}{\partial x_j}, \ x_p \frac{\partial}{\partial x_q} \right] =  f_p \frac{\partial}{\partial x_q}=0.
		\]
		It follows that \( f_q = 0 \) for all $p\not= i$; and  
		\[
		d\left( m \frac{\partial}{\partial x_i} \right) = \alpha \frac{\partial}{\partial x_i}, \quad \alpha \in \mathbb{F}.
		\]
		
		First, suppose that \( x_i \) is involved in \( m \); that is, \( m = x_i m' \).  
		Choose \( j \) such that \( x_j \) is not involved in \( m \). Then:
		\[
		\left[m' x_j \frac{\partial}{\partial x_i}, \ x_i \frac{\partial}{\partial x_j}\right] 
		= m' x_j \frac{\partial}{\partial x_j} - x_i m' \frac{\partial}{\partial x_i}.
		\]
		Applying the derivation \( d \) to both sides, we obtain:
		\[
		\left[d\Big(m' x_j \frac{\partial}{\partial x_i}\Big), \ x_i \frac{\partial}{\partial x_j}\right]
		= d\Big(m' x_j \frac{\partial}{\partial x_j}\Big) - d\Big( m \frac{\partial}{\partial x_i}\Big).
		\]
		The left-hand side lies in \( \mathbb{F}  \frac{\partial}{\partial x_j} \), 
		as does the right-hand side: $d\Big(m' x_j \frac{\partial}{\partial x_j}\Big) $ lies in $ \mathbb{F}  \frac{\partial}{\partial x_j} $, whereas 
		$d\Big( m \frac{\partial}{\partial x_i}\Big)$ lies in $\mathbb{F}  \frac{\partial}{\partial x_i}.$  Hence, $
		d\left(m \frac{\partial}{\partial x_i}\right) = 0.$
		
		Now suppose that \( x_i \) is not involved in \( m \).  We have: $ m = m' x_k$ for $k\not= i.$ 
		Then:
		\[
		\left[x_k \frac{\partial}{\partial x_i},\ m' x_i \frac{\partial}{\partial x_i}\right]
		= m \frac{\partial}{\partial x_i}.
		\]
		By the above,
		\[
		d\left(m' x_i \frac{\partial}{\partial x_i}\right)= 0, \quad \text{which implies that:} \quad 
		d\left(m \frac{\partial}{\partial x_i}\right) = 0.
		\]
		Thus,  \( d = 0 \). This completes the proof of the lemma. \end{proof}

	\begin{proof}[Proof of Theorem \ref{Theo1}]
		Let \( d: W_\infty(\mathbb{F}) \to W_\infty(\mathbb{F}) \) be a derivation.  
		By Lemmas \ref{Lemma8} and \ref{Lemma9}, there exists an element \( w \in W_{\mathrm{fin}}(\mathbb{F}) \) such that 
		$d(a) = [a, w]$  for any element $$ a \in \sum_{i=1}^\infty \mathbb{F}  \frac{\partial}{\partial x_i} + \mathfrak{gl}_\infty(\mathbb{F}).$$ Now by Lemma \ref{Lemma10}, we conclude that \( d = \hat{w} \).  
		This completes the proof of Theorem \ref{Theo1}. \end{proof}

	\section{The poof of Theorem \ref{Theo2}}
	
	\begin{Lemma}\label{Lemma11} The centralizer of \( W_\infty(\mathbb{F}) \) in \( \widetilde{W} \) is zero. \end{Lemma}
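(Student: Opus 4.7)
The plan is to combine Corollary \ref{cor2} with a one-line check on a partial-derivative operator. Since $\mathfrak{sl}_\infty(\mathbb{F})\subseteq W_\infty(\mathbb{F})$ (via the identification spelled out at the start of Section 2), any element $w\in\widetilde{W}$ centralizing $W_\infty(\mathbb{F})$ in particular satisfies $\mathfrak{sl}_\infty(\mathbb{F})\,w = (0)$. Corollary \ref{cor2} then pins $w$ down completely, up to a single scalar:
\[
w \;=\; c\sum_{i=1}^{\infty} x_i\frac{\partial}{\partial x_i},\qquad c\in\mathbb{F}.
\]

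Next, I would exhibit a single element of $W_\infty(\mathbb{F})$ against which $w$ must commute but does not (unless $c=0$). The obvious candidate is $\frac{\partial}{\partial x_1}\in W_1(\mathbb{F})\subseteq W_\infty(\mathbb{F})$. Using the commutator rule recorded just after \eqref{eq1} (with $h_1=1$, $f_j=cx_j$), the bracket lies in $\widetilde{W}$ and equals
\[
\Big[\frac{\partial}{\partial x_1},\; c\sum_{j=1}^{\infty} x_j\frac{\partial}{\partial x_j}\Big]
\;=\; c\,\frac{\partial}{\partial x_1}.
\]
Since $w$ centralizes $W_\infty(\mathbb{F})$, this expression has to vanish, which forces $c=0$ and hence $w=0$.

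I do not expect any real obstacle: the whole content of the lemma is already packed into Corollary \ref{cor2}, and the only additional input needed is that $W_\infty(\mathbb{F})$ contains at least one element, such as $\frac{\partial}{\partial x_1}$, that does not commute with $\sum_i x_i\frac{\partial}{\partial x_i}$. If a referee wanted more symmetry one could equally well take any $\frac{\partial}{\partial x_k}$ and obtain $c\,\frac{\partial}{\partial x_k}=0$, but one such $k$ suffices.
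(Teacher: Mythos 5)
Your proof is correct, but it takes a genuinely different route from the paper's. The paper's own argument is entirely self-contained and never invokes Corollary \ref{cor2}: it first brackets $w$ with all the translation operators $\frac{\partial}{\partial x_i}$ to force $w=\sum_i\alpha_i\frac{\partial}{\partial x_i}$ with constant coefficients $\alpha_i\in\mathbb{F}$, and then brackets with $x_i\frac{\partial}{\partial x_1}$ to show each $\alpha_i$ vanishes. You reverse the order of the two ingredients: you first use the copy of $\mathfrak{sl}_\infty(\mathbb{F})$ inside $W_\infty(\mathbb{F})$ together with the heavier Corollary \ref{cor2} (which rests on the computation in Lemma \ref{Lemma3}) to pin $w$ down to the line $\mathbb{F}\sum_i x_i\frac{\partial}{\partial x_i}$, and only then use the single element $\frac{\partial}{\partial x_1}$ to kill the remaining scalar via $\bigl[\frac{\partial}{\partial x_1},\,\sum_j x_j\frac{\partial}{\partial x_j}\bigr]=\frac{\partial}{\partial x_1}$, which is the correct bracket. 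Both arguments are valid and equally short; yours buys economy of new computation at the price of depending on the earlier, more involved centralizer result, whereas the paper's version can be read independently of Section 2 and, being elementary, makes the lemma self-evidently robust.
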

	
	\begin{proof} Suppose that \( w \in \widetilde{W} \) and $[W_\infty(\mathbb{F}), w] = (0).$ 
		In particular,
		\[
		\left[ \frac{\partial}{\partial x_i},\ w\right] = 0 \quad \text{for every } i \geq 1.
		\]
		This implies that 
		\[
		w = \sum_{i=1}^\infty \alpha_i \frac{\partial}{\partial x_i}, \quad \text{where} \quad \alpha_i \in \mathbb{F}.
		\]
		Let us assume that \( \alpha_i \neq 0 \). Then
		\[
		[w, x_i \frac{\partial}{\partial x_1}] = \alpha_i \frac{\partial}{\partial x_1}  \neq 0.
		\]
		This contradicts the assumption and completes the proof of the lemma. \end{proof}

	\begin{proof}[Proof of Theorem \ref{Theo2}] Let \( d : W_{\mathrm{fin}}(\mathbb{F}) \to W_{\mathrm{fin}}(\mathbb{F}) \) be a derivation. Arguing as in the proof of Theorem \ref{Theo1}, we find an element \( w \in W_{\text{fin}}(\mathbb{F}) \) such that  
		\[
		(d - \hat{w})(W_{\infty}(\mathbb{F})) = (0).
		\]
		Since \( W_{\infty}(\mathbb{F}) \) is an ideal in \( W_{\text{fin}}(\mathbb{F}) \), we conclude that
		\[
		(d - \hat{w})\Big([W_{\text{fin}}(\mathbb{F}), W_{\infty}(\mathbb{F})]\Big) = \Big[(d - \hat{w}) \big( W_{\text{fin}}(\mathbb{F})\big), \ W_{\infty}(\mathbb{F}) \Big]= (0).
		\]
		By Lemma \ref{Lemma11},  we have that \( d = \hat{w} \), which completes the proof of  Theorem \ref{Theo2}. \end{proof}

	\section{The poof of Theorem \ref{Theo3}}
	
	To prove Theorem \ref{Theo3}, we will need a couple of lemmas. 
	
	Arguing as above, we assume that $d:W_X (\mathbb{F}) \to W_X (\mathbb{F})$ is a derivation and $d(W_{\infty}(\mathbb{F}))=(0).$
	\begin{Lemma}\label{Lemma12} Let \( i_0 > 1 \), 
		\[ \text{and let} \quad 
		a \in \sum_{j \neq i_0} \mathbb{F}[X \setminus \{x_{i_0}\}] \frac{\partial}{\partial x_j}. \quad \text{Then} \quad   d(a) \in \sum_{j \neq i_0} \mathbb{F}[X \setminus \{x_{i_0}\}] \frac{\partial}{\partial x_j} .\]
	\end{Lemma}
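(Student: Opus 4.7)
The plan is to exploit the hypothesis $d(W_\infty(\mathbb{F})) = (0)$ by choosing two elements of $W_\infty(\mathbb{F})$ that commute with $a$; applying the Leibniz rule then forces $d(a)$ to commute with them as well, and these commutation relations encode exactly the two conditions we need to verify. Concretely, write $d(a) = \sum_{j=1}^{\infty} g_j \frac{\partial}{\partial x_j}$ with polynomials $g_j \in \mathbb{F}[X]$. What must be proved is: (i) each $g_j$ lies in $\mathbb{F}[X \setminus \{x_{i_0}\}]$, and (ii) $g_{i_0} = 0$.

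For (i), the test element will be $\frac{\partial}{\partial x_{i_0}} \in W_\infty(\mathbb{F})$. Writing $a = \sum_{j \neq i_0} f_j \frac{\partial}{\partial x_j}$ with $f_j \in \mathbb{F}[X \setminus \{x_{i_0}\}]$, a direct computation gives $[a, \frac{\partial}{\partial x_{i_0}}] = -\sum_{j \neq i_0} \frac{\partial f_j}{\partial x_{i_0}} \frac{\partial}{\partial x_j} = 0$. Applying $d$ and using $d(\frac{\partial}{\partial x_{i_0}}) = 0$, we obtain $[d(a), \frac{\partial}{\partial x_{i_0}}] = 0$, which expands to $-\sum_{j} \frac{\partial g_j}{\partial x_{i_0}} \frac{\partial}{\partial x_j} = 0$. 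Hence $\frac{\partial g_j}{\partial x_{i_0}} = 0$ for every $j$, proving (i).

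For (ii), I would run the same argument with $x_{i_0}\frac{\partial}{\partial x_{i_0}} \in W_\infty(\mathbb{F})$ in place of $\frac{\partial}{\partial x_{i_0}}$. In the bracket $[f_j\frac{\partial}{\partial x_j}, x_{i_0}\frac{\partial}{\partial x_{i_0}}]$ with $j \neq i_0$, both terms vanish, since $\frac{\partial x_{i_0}}{\partial x_j} = 0$ and $\frac{\partial f_j}{\partial x_{i_0}} = 0$; consequently $[a, x_{i_0}\frac{\partial}{\partial x_{i_0}}] = 0$ and therefore $[d(a), x_{i_0}\frac{\partial}{\partial x_{i_0}}] = 0$. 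Expanding this bracket and substituting the output of (i), only the $j = i_0$ contribution survives, giving $g_{i_0} \frac{\partial}{\partial x_{i_0}} = 0$, whence $g_{i_0} = 0$.

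I do not anticipate a genuine obstacle: the lemma reduces to spotting the right pair of test elements inside $W_\infty(\mathbb{F})$ that simultaneously commute with $a$. All bracket manipulations are well-defined by the discussion following equation \eqref{eq1}, which guarantees $[W_\infty(\mathbb{F}), \widetilde{W}] \subseteq \widetilde{W}$, so the Leibniz rule may safely be applied even though $d(a)$ is a priori an element of $W_X(\mathbb{F})$ involving infinitely many coefficients.
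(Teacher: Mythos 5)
Your proof is correct and follows essentially the same route as the paper: both use the two test elements $\frac{\partial}{\partial x_{i_0}}$ and $x_{i_0}\frac{\partial}{\partial x_{i_0}}$ from $W_\infty(\mathbb{F})$, which commute with $a$ and are killed by $d$, to force first $\frac{\partial g_j}{\partial x_{i_0}}=0$ for all $j$ and then $g_{i_0}=0$. Your write-up merely spells out the bracket computations that the paper leaves implicit.
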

	
	\begin{proof} Let 
		\[
		d(a) = \sum_{j=1}^{\infty} f_j \frac{\partial}{\partial x_j}.
		\]
		From \( [a, \frac{\partial}{\partial x_{i_0}}] = 0 \), it follows that
		\[
		\left[d(a), \ \frac{\partial}{\partial x_{i_0}}\right] = 0. \] Hence, $ f_j \in \mathbb{F}\left[X \setminus \{x_{i_0}\}\right]$ for all $ j \geq 1.$
		Similarly, 
		\[
		\left[d(a), \ x_{i_0} \frac{\partial}{\partial x_{i_0}}\right] = 0 .\] Thus, $f_{i_0} = 0,$ and the lemma is proved.  \end{proof}

	\begin{Lemma}\label{Lemma13} Let \[ a = \sum_{j \neq i_0} f_j \frac{\partial}{\partial x_j} \in W_X(\mathbb{F}), \] where all polynomials  \( f_j \) are divisible by \( x_{i_0}^k \) for some \( k \geq 1 \). Then 
		\[
		d(a) \in x_{i_0}^{k-1} \ W_X(\mathbb{F}).
		\] \end{Lemma}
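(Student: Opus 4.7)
The plan is to realize $a$ as a single Lie bracket $a = [e, b]$ with $e \in W_\infty(\mathbb{F})$ and $b \in W_X(\mathbb{F})$, so that the hypothesis $d(e) = 0$ reduces the computation of $d(a)$ to a bracket of $e$ with $d(b)$, from which the divisibility of $d(a)$ by $x_{i_0}^{k-1}$ will be visible directly.

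First I would fix an index $q \neq i_0$ and take $e = x_{i_0}^k \frac{\partial}{\partial x_q}$, which lies in $W_\infty(\mathbb{F})$ as a polynomial vector field in the two variables $x_{i_0}, x_q$; hence $d(e) = 0$ by hypothesis. Writing $f_j = x_{i_0}^k \tilde{f}_j$ with $\tilde{f}_j \in \mathbb{F}[X]$, I would construct $b = \sum_{r} b_r \frac{\partial}{\partial x_r} \in W_X(\mathbb{F})$ by setting $b_{i_0} = 0$ and taking $b_r$ (for $r \neq i_0$) to be the polynomial antiderivative of $\tilde{f}_r$ with respect to $x_q$ vanishing at $x_q = 0$. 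The characteristic-zero assumption ensures these antiderivatives are polynomials, so $b \in W_X(\mathbb{F})$. A direct application of the bracket formula will then verify $a = [e, b]$.

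Applying $d$ and using $d(e) = 0$ yields $d(a) = [e, d(b)]$. Writing $d(b) = \sum_s H_s \frac{\partial}{\partial x_s}$ with $H_s \in \mathbb{F}[X]$, the bracket expands, using $\frac{\partial x_{i_0}^k}{\partial x_s} = k x_{i_0}^{k-1} \delta_{s, i_0}$, as
$$d(a) = \sum_s x_{i_0}^k \frac{\partial H_s}{\partial x_q} \frac{\partial}{\partial x_s} \ - \ k x_{i_0}^{k-1} H_{i_0} \frac{\partial}{\partial x_q}.$$
Every coefficient on the right is divisible by $x_{i_0}^{k-1}$ (the first sum by $x_{i_0}^k$, the remaining term by $x_{i_0}^{k-1}$), giving $d(a) \in x_{i_0}^{k-1} W_X(\mathbb{F})$, as required.

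The main hurdle is conceptual rather than computational: one must notice the useful representation $a = [x_{i_0}^k \frac{\partial}{\partial x_q}, b]$. Once this is in place, the loss of exactly one power of $x_{i_0}$ is transparent — it comes entirely from the $\frac{\partial}{\partial x_{i_0}}$-component $H_{i_0}$ of $d(b)$, the only place in the bracket where a power of $x_{i_0}^k$ is differentiated. No induction on $k$ is required, and Lemma \ref{Lemma12} is not invoked in this argument.
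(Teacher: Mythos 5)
Your proof is correct and follows essentially the same route as the paper: both realize $a$ as a bracket $\bigl[x_{i_0}^k \frac{\partial}{\partial x_{\bullet}},\, b\bigr]$ with the left factor in $W_\infty(\mathbb{F})$ (hence killed by $d$) and then observe that bracketing $d(b)$ against $x_{i_0}^k \frac{\partial}{\partial x_{\bullet}}$ can lose at most one power of $x_{i_0}$. The only cosmetic difference is that the paper antidifferentiates with respect to $x_{i_0}$ itself, whereas you use an auxiliary variable $x_q$; both choices work equally well.
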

	
	\begin{proof} Let \( f_j = x_{i_0}^k f'_j \), where \( f_j' \in \mathbb{F}[X] .\) Choose polynomials \( \widetilde{f_j} \in \mathbb{F}[X] \)  such that 
		$$\frac{\partial}{\partial x_{i_0}} \widetilde{f_j} = f_j' .$$ Then 
		\[
		a = \left[x_{i_0}^k \frac{\partial}{\partial x_{i_0}}  , \ \sum_{j \neq i_0} \widetilde{f_j} \frac{\partial}{\partial x_j}\right].
		\]
		This implies that
		\[
		d(a) \in  \left[x_{i_0}^k \frac{\partial}{\partial x_{i_0}}  , \ W_X(\mathbb{F})\right] \subseteq x_{i_0}^{k-1} \ W_X(\mathbb{F}),
		\]
		and completes the proof of the lemma. \end{proof}

	\begin{proof}[Proof of Theorem \ref{Theo3}] Suppose that $w \in W_X(\mathbb{F})$, and  \[ d(w) = \sum_{i=1}^{\infty} h_i \frac{\partial}{\partial x_i}  \neq 0 .\] Choose \( i_0 \geq 1 \) such that \( h_{i_0} \neq 0 \). Let \( l \geq 1 \) be such that \( h_{i_0} \notin x_{i_o}^l \mathbb{F}[X] \).
		
		Let 
		\[
		w = \sum_{i=0}^\infty f_i \frac{\partial}{\partial x_i}.
		\]
		Since \( d(W_{\infty}(\mathbb{F})) = (0) ,\) we may assume without loss of generality that \( f_{i_0} = 0 \). Then the element \( w \) can be represented as
		\[
		w = w_0 + x_{i_0} w_1 +  \cdots + x_{i_0}^l w_l + w', \quad \text{where}
		\]
		\[  w_0, \ w_1, \ \dots, \ w_l \in \sum_{j \neq i_0} \mathbb{F}[X \setminus \{x_{i_0}\}]  \frac{\partial}{\partial x_j}, \quad \text{and} \quad w' \in x_{i_0}^{l+1} \ W_X(\mathbb{F}) .\]
		
		Choose  $i_1 \neq i_0.$ Now let 
		\[
		w_k = \sum_{j \neq i_0} f_{k j}  \frac{\partial}{\partial x_j} \quad \text{for} \quad 0 \leq k \leq l.
		\] Choose polynomials $$\widetilde{f_{kj}} \in \mathbb{F}[X \setminus \{x_{i_0}\}] \quad \text{such that} \quad  \frac{\partial \widetilde{f_{kj}}}{\partial x_{i_1}}=f_{kj}.$$ Consider $$ 
		\widetilde{w_k} = \sum_{j \neq i_0} \widetilde{f_{k j}}  \frac{\partial}{\partial x_j} .$$ We get
		\[
		\left[ x_{i_0}^k \frac{\partial}{\partial x_{i_1}}, \ \widetilde{w_k} \right] = x_{i_0}^k w_k,\] which implies that \[  d( x_{i_0}^k\, w_k )= \left[d\left( x_{i_0}^k \frac{\partial}{\partial x_{i_1}} \right),  \widetilde{w_k} \right] +\left[ x_{i_0}^k \frac{\partial}{\partial x_{i_1}},  d(\widetilde{w_k}) \right]=  \left[ x_{i_0}^k \frac{\partial}{\partial x_{i_1}},  d(\widetilde{w_k}) \right].
		\]
		Hence, the \( \frac{\partial}{\partial x_{i_0}} \)-component of \( d\left( x_{i_0}^k w_k\right) \) is zero by Lemma \ref{Lemma12}. Since $d(w)=d(w_0)+d(x_{i_0}w_1)+\cdots + d(x_{i_0}^l w_l)+ d(w'),$ we get that the \( \frac{\partial}{\partial x_{i_0}} \)-component of \( d(w) \) is equal to the \( \frac{\partial}{\partial x_{i_0}} \)-component of \( d(w') \). By Lemma~\ref{Lemma13}, this $  \frac{\partial}{\partial x_{i_0}}$-component lies in $$ x_{i_0}^l\mathbb{F}[X] \frac{\partial}{\partial x_{i_0}} ,$$ a contradiction. Thus, we have shown that \( d = 0 \), which completes the proof of Theorem \ref{Theo3}. \end{proof}
	
	\section{Some Conjectures}

	A.~Rudakov \cite{Rudakov1} (see also V.~Bavula \cite{Bavula,Bavula2,Bavula3}, H.-P.~Kraft and A.~Re\-ge\-ta~\cite{Kraft}) showed that every automorphism of the Lie algebra $W_{n}(\mathbb{F})$ is a conjugation by an automorphism of $\mathbb{F}[X_n]$. Consider the following group of automorphisms of the polynomial algebra $\mathbb{F}[X]$: \\
	$\mathrm{Aut}_{\mathrm{fin}}\, \mathbb{F}[X] = \big\{ \varphi \in \mathrm{Aut}\, \mathbb{F}[X] \  \big| $   every variable $x_n $     occurs in finitely many polynomials $\varphi(x_i)$ for $i \geq 1,$   and in finitely many polynomials 	$\varphi^{-1}(x_i)$ for $ i \geq 1 \}.$

\begin{Conjecture} Every automorphism of the Lie algebra $W_{\infty}(\mathbb{F})$ is a conjugation by an automorphism $\varphi \in \mathrm{Aut}_{\mathrm{fin}}\, \mathbb{F}[X]$. \end{Conjecture}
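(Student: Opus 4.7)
The plan is to adapt Rudakov's classical argument \cite{Rudakov1} from $W_n(\mathbb{F})$ to $W_\infty(\mathbb{F})$, exploiting the filtration $W_1 \subset W_2 \subset \cdots$. Let $\Phi \in \mathrm{Aut}\, W_\infty(\mathbb{F})$; the goal is to produce an automorphism $\varphi \in \mathrm{Aut}_{\mathrm{fin}}\,\mathbb{F}[X]$ with $\Phi(D) = \varphi \circ D \circ \varphi^{-1}$ for every $D \in W_\infty(\mathbb{F})$.

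The first step is to understand $\Phi(W_n(\mathbb{F})) \subset W_\infty(\mathbb{F})$ for each $n$. Since $\Phi|_{W_n}$ is an injective homomorphism of $W_n(\mathbb{F})$ into $W_\infty(\mathbb{F})$, the plan is to show, via a Rudakov-style argument applied to this embedding, that $\Phi(W_n)$ is a \emph{coordinate subalgebra}: there exist algebraically independent polynomials $y_1^{(n)}, \ldots, y_n^{(n)} \in \mathbb{F}[X]$ such that $\Phi(W_n)$ consists precisely of the derivations of $\mathbb{F}[X]$ of the form $p(y_1^{(n)}, \ldots, y_n^{(n)}) \cdot \partial/\partial y_i^{(n)}$, where $p$ ranges over $\mathbb{F}[y_1^{(n)}, \ldots, y_n^{(n)}]$ and $1 \leq i \leq n$. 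This yields an injective $\mathbb{F}$-algebra homomorphism $\varphi_n : \mathbb{F}[X_n] \to \mathbb{F}[X]$, $x_i \mapsto y_i^{(n)}$, that intertwines $W_n(\mathbb{F})$ with $\Phi(W_n)$.

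The second step is to verify compatibility and take a direct limit. The inclusion $W_m \subset W_n$ admits an intrinsic description, for example as the centralizer in $W_n$ of the set $\{\partial/\partial x_j,\; x_j\partial/\partial x_j : m < j \leq n\}$; applying $\Phi$ yields the parallel description of $\Phi(W_m)$ inside $\Phi(W_n)$, from which one reads off $\varphi_n|_{\mathbb{F}[X_m]} = \varphi_m$. Passing to the direct limit produces an endomorphism $\varphi$ of $\mathbb{F}[X]$; carrying out the same construction for $\Phi^{-1}$ gives a two-sided inverse, so $\varphi \in \mathrm{Aut}\,\mathbb{F}[X]$. Finally, the requirement that both $\Phi$ and $\Phi^{-1}$ preserve $W_\infty(\mathbb{F})$---equivalently, that $\Phi(\partial/\partial x_i)$ and $\Phi^{-1}(\partial/\partial x_i)$ be finite sums---translates precisely into the two finiteness conditions defining $\mathrm{Aut}_{\mathrm{fin}}\,\mathbb{F}[X]$.

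The principal obstacle is the first step. Rudakov's original identification of an automorphism of $W_n(\mathbb{F})$ with an automorphism of $\mathbb{F}[X_n]$ relies on the uniqueness, up to conjugation, of the maximal subalgebra of finite codimension in $W_n$. Here one has instead an \emph{embedding} of $W_n$ into the much larger $W_\infty$, and it is not a priori clear that the image is realized in polynomial-vector-field form on $n$ algebraically independent coordinates. Establishing this requires an embedding theorem for simple Cartan-type Lie algebras into Lie algebras of polynomial vector fields, refining Rudakov's result. Such a theorem is plausible given the faithful action of $W_n$ on its natural module $\mathbb{F}[X_n]$ and the classification of low-degree $W_n$-modules, but identifying $\mathbb{F}[X_n]$ intrinsically inside an appropriate $\Phi(W_n)$-invariant subspace of $\mathbb{F}[X]$---and then verifying that the resulting polynomials $y_i^{(n)}$ for different $n$ organize coherently under the intrinsic characterization of the inclusions $W_m \subset W_n$---is the delicate technical point.
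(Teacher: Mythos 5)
This statement is labeled a \emph{Conjecture} in the paper, and the authors give no proof of it; there is therefore nothing in the text to compare your argument against, and your proposal must stand or fall on its own.

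As it stands it does not close the conjecture: it is a program whose central step is exactly the open point, and you say so yourself. The claim that $\Phi(W_n(\mathbb{F}))$ is a coordinate subalgebra of $W_X(\mathbb{F})$ --- i.e.\ that there exist algebraically independent $y_1^{(n)},\dots,y_n^{(n)}$ with $\Phi(W_n(\mathbb{F}))=\sum_i \mathbb{F}[y_1^{(n)},\dots,y_n^{(n)}]\,\partial/\partial y_i^{(n)}$ --- does not follow from Rudakov's theorem. Rudakov classifies \emph{automorphisms} of $W_n(\mathbb{F})$, essentially by the rigidity of the maximal subalgebra of minimal codimension inside $W_n(\mathbb{F})$ itself; here you have an embedding of $W_n(\mathbb{F})$ into the much larger algebra $W_\infty(\mathbb{F})$, where the preimage of a point stabilizer need not have finite codimension and no analogous rigidity is available. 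No such ``embedding theorem'' for Cartan-type algebras into $W_X(\mathbb{F})$ is known or cited, and calling it ``plausible'' is precisely why the statement remains a conjecture. There are also two secondary gaps you gloss over: the coordinates $y_i^{(n)}$, even if they exist, are only determined up to an automorphism of $\mathbb{F}[y_1^{(n)},\dots,y_n^{(n)}]$, so the claimed compatibility $\varphi_n|_{\mathbb{F}[X_m]}=\varphi_m$ requires a coherent \emph{choice} across all $n$, not just the (automatic) inclusion $\Phi(W_m)\subset\Phi(W_n)$; and the assertion that the construction applied to $\Phi^{-1}$ yields a two-sided inverse of $\varphi$ needs the same coherence on the other side. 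The final translation of the finiteness conditions into membership in $\mathrm{Aut}_{\mathrm{fin}}\,\mathbb{F}[X]$ is the one part that is essentially routine. In short: a sensible strategy, honestly flagged, but the decisive first step is missing, so this is not a proof.
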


\begin{Conjecture}  Every automorphism of the Lie algebra $W_{X}(\mathbb{F})$ is a conjugation by an automorphism from $\mathrm{Aut}\, \mathbb{F}[X]$. \end{Conjecture}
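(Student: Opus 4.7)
The plan is to adapt A.~Rudakov's method, which establishes the analogous statement for $W_n(\mathbb{F})$ in the finite-variable case. Let $\Phi: W_X(\mathbb{F}) \to W_X(\mathbb{F})$ be a Lie algebra automorphism; the aim is to produce an algebra automorphism $\varphi \in \mathrm{Aut}\,\mathbb{F}[X]$ such that $\Phi(D) = \varphi \circ D \circ \varphi^{-1}$ for every $D \in W_X(\mathbb{F})$. For each $\mathbb{F}$-rational point $p = (p_i)_{i\ge 1}$ of the infinite affine space, introduce the stabilizer subalgebra
\[
L^p = \Bigl\{ \sum_{i\geq 1} f_i \frac{\partial}{\partial x_i} \in W_X(\mathbb{F}) : f_i(p) = 0 \text{ for all } i\geq 1 \Bigr\},
\]
and the descending filtration $L^p = L^p_0 \supset L^p_1 \supset L^p_2 \supset \cdots$ by order of vanishing at $p$.

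The first and most delicate step is to characterize the family $\{L^p\}_{p}$ intrinsically inside $W_X(\mathbb{F})$, so that $\Phi$ is forced to permute its members. In Rudakov's finite-dimensional setting, $L^0$ is pinned down as the unique maximal subalgebra of codimension $n$; this is unavailable here since every $L^p$ has infinite codimension. I would instead seek a characterization phrased in Lie-algebraic terms, such as: $L^p$ is maximal among proper subalgebras $M \subset W_X(\mathbb{F})$ for which the quotient $W_X(\mathbb{F})/M$ carries a compatible decreasing filtration $W_X(\mathbb{F}) \supset M \supset M_1 \supset \cdots$ whose successive quotients are locally finite-dimensional modules over $M/M_1$ (tapping into the techniques of Section~2, in particular Lemmas~\ref{Lemma1}--\ref{Lemma2}), and whose $M_1$ acts pro-nilpotently on each such quotient. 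If such a characterization can be established, then $\Phi$ induces a bijection $\tau$ on the set of rational points.

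The second step is to upgrade $\tau$ into an algebra automorphism of $\mathbb{F}[X]$. Here I would exploit the matching of the filtrations $L^{\tau(p)}_{\bullet} = \Phi(L^p_{\bullet})$ and the canonical identifications $L^p_{-1}/L^p_0 \cong \mathbb{F}^{X}$, $L^p_0/L^p_1 \cong \mathfrak{gl}_{X}$ acting naturally on this tangent space, together with the graded pieces $L^p_k/L^p_{k+1}$ realized as symmetric powers tensored with the tangent space. An analysis of these $\mathfrak{gl}_{\infty}$-modules analogous to Section~2, combined with the density of $\mathfrak{sl}_\infty(\mathbb{F})$ in $W_X(\mathbb{F})$ (Lemma~\ref{Lemma11}), should force $\tau$ to come from a genuine polynomial map $\varphi \in \mathrm{Aut}\,\mathbb{F}[X]$. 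Replacing $\Phi$ by $\mathrm{Ad}(\varphi)^{-1}\circ \Phi$, one reduces to the case $\Phi(L^p) = L^p$ for every $p$, and in particular $\Phi$ preserves $L^0$ and its filtration. A further analysis of the induced graded automorphism, first on $L^0/L^0_1 \cong \mathfrak{gl}_X$ (where Neeb's result on automorphisms of $\mathfrak{gl}_\infty$-type Lie algebras could be invoked) and then lifting inductively through each $L^0_k/L^0_{k+1}$, should conclude $\Phi = \mathrm{id}$.

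The hard part is the first step: giving a genuinely intrinsic, purely Lie-algebraic description of the point stabilizers $L^p$ when $X$ is infinite. Rudakov's codimension-based arguments fail here, and one must substitute a condition combining local finiteness with pro-nilpotency in a way that is preserved by $\Phi$ but excludes spurious subalgebras. A secondary obstacle is the rigidity step that passes from a set-theoretic bijection of points to a polynomial automorphism: because $\mathrm{Aut}\,\mathbb{F}[X]$ is enormous (containing wild automorphisms), the Lie-algebraic data extracted from $\Phi$ must be exploited with care to ensure polynomiality of $\varphi$ rather than merely a compatible point map.
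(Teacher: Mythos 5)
The statement you are addressing is not a theorem of the paper at all: it appears in the final section explicitly labelled as a \emph{Conjecture}, and the authors give no proof of it. So there is no argument in the paper to compare yours against; the relevant question is only whether your proposal itself constitutes a proof. It does not. What you have written is a research plan whose two essential steps are both left open, and you say so yourself: the intrinsic Lie-algebraic characterization of the point stabilizers $L^p$ is only something you ``would seek,'' conditioned on an ``if such a characterization can be established,'' and the rigidity step from a set-theoretic bijection of points to a genuine polynomial automorphism $\varphi\in\mathrm{Aut}\,\mathbb{F}[X]$ is likewise only asserted to ``should'' follow. Since Rudakov's actual mechanism (identifying $L^0$ as the unique maximal subalgebra of codimension $n$) is unavailable in infinitely many variables --- every $L^p$ has infinite, indeed uncountable, codimension, and nothing rules out maximal subalgebras of $W_X(\mathbb{F})$ that do not come from rational points --- the first step is precisely where the difficulty of the conjecture lives, and it is not carried out.

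There are also smaller inaccuracies that would need repair even if the skeleton were sound. Lemma~\ref{Lemma11} of the paper states that the centralizer of $W_\infty(\mathbb{F})$ in $\widetilde{W}$ is zero; it is not a density statement about $\mathfrak{sl}_\infty(\mathbb{F})$ in $W_X(\mathbb{F})$, so it cannot be invoked for the rigidity step in the way you describe. Similarly, the cited result of Neeb concerns \emph{derivations} of $\mathfrak{sl}_\infty(\mathbb{F})$, not automorphisms of $\mathfrak{gl}_{\mathbb{N}}$-type algebras, and the graded piece $L^0_0/L^0_1$ for $W_X(\mathbb{F})$ is a row-finite infinite matrix algebra whose automorphism group is not treated by any result quoted in the paper. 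None of this is to say the strategy is hopeless --- it is a reasonable line of attack, consistent with the spirit of Section~2 --- but as it stands the proposal identifies the obstacles rather than overcoming them, and the conjecture remains unproved.
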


\end{document}